\newtheorem{theorem}{Theorem}[section]
\newtheorem{lemma}[theorem]{Lemma}
\newtheorem{cor}[theorem]{Corollary}
\newtheorem{prop}[theorem]{Proposition}
\theoremstyle{definition}
\newtheorem{remark}[theorem]{Remark}
\newtheorem{definition}[theorem]{Definition}
\noindent\makebox[0mm][r]{\arabic{enumi}.}}
\def\<{\langle}
\def\>{\rangle}
\def\0{\mathbf{0}}
\def\CC{{\mathbb C}}
\def\EE{{\mathbb E}}
\def\cE{{\mathcal E}}
\def\NN{{\mathbb N}}
\def\QQ{{\mathbb Q}}
\def\RR{{\mathbb R}}
\def\ZZ{{\mathbb Z}}
\def\bbE{{\mathbb E}}
\def\cJ{{\mathcal J}}
\def\del{\partial}
\def\vol{{\rm \operatorname{vol}}}
\def\rank{{\operatorname{rank}}}
\def\codim{{\rm codim}}
\def\minus{\smallsetminus}
\def\nothing{\varnothing}
\newcommand*{\defeq}{\mathrel{\vcenter{\baselineskip0.5ex \lineskiplimit0pt
                     \hbox{\scriptsize.}\hbox{\scriptsize.}}}%
                     =}
\numberwithin{equation}{section}
\begin{document}

\mbox{}
\title[$A$-hypergeometric rank versus the volume of $A$]{On the rank of an $A$-hypergeometric $D$-module \\ versus the normalized volume of $A$} 
 
\author{Christine Berkesch}
\address{School of Mathematics \\
University of Minnesota.}
\email{cberkesc@umn.edu}

\author{Mar\'ia-Cruz Fern\'andez-Fern\'andez}
\address{Departamento de \'Algebra \\
Universidad de Sevilla.}
\email{mcferfer@algebra.us.es}

\thanks{CB was partially supported by NSF Grant DMS 1661962. MCFF was partially supported by MTM2016-75024-P and FEDER}

\subjclass[2010]{13N10, 32C38, 33C70, 14M25.}
\keywords{$A$--hypergeometric system, toric ring, $D$--module, holonomic rank.}

\begin{abstract}
The rank of an $A$-hypergeometric $D$-module $M_A(\beta)$, associated with a full rank $(d\times n)$-matrix $A$ and a vector of parameters $\beta\in \CC^d$, is known to be the normalized volume of $A$, denoted $\vol(A)$, when $\beta$ lies outside the exceptional arrangement $\cE(A)$, an affine subspace arrangement of codimension at least two. If $\beta\in \cE(A)$ is simple, we prove that $d-1$ is a tight upper bound for the ratio $\rank(M_A(\beta))/\vol(A)$ for any $d\geq 3$.  We also prove that the set of parameters $\beta$ such that this ratio is at least $2$ is an affine subspace arrangement of codimension at least $3$.
\end{abstract}
\maketitle

\mbox{}
\vspace{-14mm}
\parskip=0ex
\parindent2em
\parskip=1ex
\parindent0pt

\setcounter{section}{0}
\section*{Introduction}
\vspace{-2mm}

The systematic study of $A$-hypergeometric $D$-modules, also known as GKZ-systems, was initiated by Gelfand, Graev, Kapranov, and Zelevinski \cite{GGZ}, \cite{GKZ}. These are systems of linear partial differential equations in several complex variables that generalize classical hypergeometric equations.
They are determined by a matrix $A=(a_1 \cdots a_n)=(a_{i,j})$ with columns $a_k\in\ZZ^d$ and a parameter vector $\beta\in\CC^d$. 
Let $x_1,x_2,\dots,x_n$ be coordinates on $\CC^n$, with corresponding partial derivatives $\del_1,\del_2,\dots,\del_n$, so that the Weyl algebra $D$ on $\CC^n$ is generated by $x_1,\dots,x_n,\del_1,\dots,\del_n$. 
Let 
\[
I_A \defeq \<\del^u-\del^v\mid  u,v\in\NN^n, Au=Av\> \subseteq \CC[\del_1,\ldots, \del_n]
\] 
denote the \emph{toric ideal} of $A$. 
Denote by $E_i\defeq\sum_{j=1}^n a_{i,j} x_j \del_j$ the $i$th \emph{Euler operator} of $A$. The \emph{$A$-hypergeometric $D$-module} with parameter $\beta\in\CC^d$ is the left $D$-module 
\[
M_A(\beta)\defeq D/D\cdot\< I_A, E_1-\beta_1,\ldots, E_d-\beta_d\>.
\]

For any choice of $A$ and $\beta$, the module $M_A(\beta)$ is \emph{holonomic} \cite{GGZ,adolphson}. When $\beta\in\CC^d$ is generic, the dimension of the space of germs of holomorphic solutions of $M_A(\beta)$ at a nonsingular point, also known as its \emph{(holonomic) rank}, is equal to the \emph{normalized volume} $\vol(A)$ of the matrix $A$, see \eqref{eqn:normalized-volume} \cite{GKZ, adolphson}. In general, this is only a lower bound; see \cite{SST} for the case when $I_A$ is homogeneous and \cite{MMW} for the general case. 
The set 
\[
\cE(A)\defeq\{\beta\in\CC^d \mid \; \rank (M_A (\beta))>\vol(A)\} 
\]
is called the \emph{exceptional arrangement} of $A$, which is an affine subspace arrangement of codimension at least two that is closely related to the local cohomology modules of the toric ring $\CC[\del]/I_A$ \cite{MMW}. 
A parameter $\beta \in \cE(A)$ is called a \emph{rank jumping} parameter.

There is a combinatorial formula to compute the rank of $M_A(\beta)$ in terms of the \emph{ranking lattices} $\EE^\beta$ of $A$ at $\beta$ \cite{berkesch}, with previous results in 
\cite{CDD} with $d=2$ and in 
\cite{okuyama} when $d=3$ or $\beta$ is \emph{simple} (see also Section \ref{sec:Simple-rank-jumping}). 
Unfortunately, the presence of alternating signs in this formula do not yield a strong upper bound for the rank of $M_A(\beta)$; however, if $\beta$ is simple, it quickly follows that the rank of $M_A(\beta)$ is at most $(d-1)\vol(A)$, see Corollary \ref{cor:upper-bound-F-simple-rank}. 
We show that this bound is tight by constructing a sequence of examples for which the ratio $\rank(M_A(\beta))/\vol(A)$ tends to $d-1$, see Theorem \ref{thm:family-examples}. 
In addition, we prove that the equality cannot hold for any example with simple parameter $\beta$ and that our examples are minimal in certain sense, see Remark \ref{rem:remark-minimal-volume}. Another interesting feature of these examples is that $\cE(A)$ contains all the lattice points in the convex hull of the columns of $A$ and the origin.

On the other hand, there are other known upper bounds for the holonomic rank of $M_A(\beta)$. In particular, 
\[
\rank (M_A (\beta))\leq \begin{cases}
4^d \cdot \vol(A) & \text{if $I_A$ is homogeneous~\cite{SST},}\\
4^{d+1}\cdot \vol(A) & \text{otherwise~\cite{BFM-parametric}.}
\end{cases}
\]
It was shown in \cite{Fer-exp-growth} that these upper bounds are qualitatively effective, i.e., there is some $a>1$ such that for any $d\geq 3$, there is a $(d\times n$)-matrix $A_d$ and a parameter $\beta_d\in \CC^d$ such that 
\[
\rank(M_{A_{d}}(\beta_d))\geq a^d \vol(A_d).
\] 
However, the maximum possible value of $\sqrt[d]{\rank(M_A(\beta))/\vol(A)}$ that has, up until now, appeared in the literature is $\sqrt[3]{7/5}\approx 1.1187$, see \cite[Example 2.6]{Fer-exp-growth}, which was first considered in \cite{MW}. The supremum of the value of $\sqrt[d]{\rank(M_A(\beta))/\vol(A)}$ over the examples in the current note is $\sqrt[5]{4}\approx 1.3195$,  i.e., $\sqrt[d]{d-1}$ for $d=5$. It is still an open problem to find the supremum of the set of values of $\sqrt[d]{\rank(M_A(\beta))/\vol(A)}$ for variation among the set of full rank $(d\times n)$-matrices $A$ and $\beta\in \CC^d$, for $d\geq 3$ and $n\geq d+2$. 


\subsection*{Acknowledgements}
We are grateful to Laura Felicia Matusevich and Uli Walther for helpful discussions over the years on bounding the rank of an $A$-hypergeometric system. 

\section{Lower bounds for the normalized volume}
\label{sec:lowerBounds}

In this section, we recall the definition of normalized volume of an integer full rank matrix, see \eqref{eqn:normalized-volume}, and provide some lower bounds for it, see Lemma \ref{lem:volume-lower-bound} and Corollary \ref{cor:nonCM-inequality}. These bounds will be used in the proof of Corollary \ref{cor:sharper-upper-bound-F-simple-rank}.

Fix a $(d\times n)$-integer matrix $A=(a_1 \cdots a_n)$, where $a_i\in\ZZ^d$ denotes the $i$th column of $A$. 
With the convention that $0\in\NN$, 
assume that $\ZZ A\defeq \sum_{j=1}^n \ZZ a_j =\ZZ^d$ and that the affine semigroup $\NN A\defeq\sum_{j=1}^n \NN a_j$ is positive, meaning that $\NN A\cap(-\NN A)=\{\bf{0}\}$. 
We also assume for simplicity that all the columns of $A$ are distinct from each other and the origin. 

Identify $A$ with its set of columns, and for any subset $F$ of $A$, denote by $\Delta_{F}$ the convex hull in $\RR^d$ of the origin and $F$. 
We also identity $F$ with its index set $\{j\mid  a_j \in F\}$. 
Given a lattice $\Lambda$ such that $F\subseteq \Lambda\subseteq \QQ F\cap\ZZ^d$, 
the \emph{normalized volume} of $F$ in $\Lambda$ is the integer 
\begin{equation}\label{eqn:normalized-volume}
\vol_{\Lambda}(F)
= \dim(\RR F)! \cdot 
\dfrac{\vol_{\RR F}(\Delta_F)}{[ \ZZ^d\cap \QQ F : \Lambda]}, 
\end{equation}
where $\vol_{\RR F}(\cdot)$ denotes Euclidean volume in $\RR F$. We write $\vol(A)$ for $\vol_{\ZZ A}(A) = \vol_{\ZZ^d}(A)$. By convention, $\ZZ \nothing =\{\bf{0}\}$ and $\vol_{\{\bf{0}\}}(\nothing)=1$.

A subset $F$ of the columns of the matrix $A$ is a \emph{face} of $A$, denoted $F\preceq A$, if $\RR_{\geq0}F$ is a face of the cone $\RR_{\geq0}A\defeq\sum_{j=1}^n\RR_{\geq0} a_j$ and $F= A\cap\RR F$. 
The codimension of a nonempty face $F$ of $A$ is $\codim(F)\defeq d-\dim (\RR F)$, with the convention that $\codim (\nothing)=d$. 

\begin{lemma}\label{lem:adding-a-point}
If $\tau$ is a proper subset of $A$ with $\Delta_\tau \cap A=\tau$, then there exists a column $a$ of $A\setminus \tau$ such that $\Delta_{\tau\cup\{a\}}\cap A=\tau\cup\{a\}$. Moreover, if $\Delta_{\tau}$ is not full dimensional, then $a$ may be chosen so that $\dim (\Delta_{\tau\cup\{a\}})=\dim (\Delta_\tau)+1$.
\end{lemma}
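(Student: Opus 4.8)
The plan is to select $a$ by an extremal principle and then argue that extremality forbids any third column from being trapped in the enlarged region. Since $\Delta_\tau\cap A=\tau$ and $\tau\subsetneq A$, the set $A\setminus\tau$ is nonempty, and each of its columns lies strictly outside the convex body $\Delta_\tau$. I would choose $a\in A\setminus\tau$ so as to minimize the Euclidean distance $\operatorname{dist}(a,\Delta_\tau)$; this minimum is attained (finitely many columns) and is strictly positive. The claim is that this single choice already yields $\Delta_{\tau\cup\{a\}}\cap A=\tau\cup\{a\}$.

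To verify it, recall the parametrization $\Delta_{\tau\cup\{a\}}=\conv(\Delta_\tau\cup\{a\})=\{(1-t)q+ta : q\in\Delta_\tau,\ t\in[0,1]\}$. Suppose a column $c\in A\setminus(\tau\cup\{a\})$ were to lie in $\Delta_{\tau\cup\{a\}}$. Since $c\notin\Delta_\tau$ (because $\Delta_\tau\cap A=\tau$), we must have $c=(1-t)q+ta$ with $q\in\Delta_\tau$ and $t\in(0,1)$. Now I would invoke the key structural fact: the function $x\mapsto\operatorname{dist}(x,\Delta_\tau)$ is convex because $\Delta_\tau$ is convex, so along the segment from $q$ (where it vanishes) to $a$ it satisfies $\operatorname{dist}(c,\Delta_\tau)\le t\cdot\operatorname{dist}(a,\Delta_\tau)<\operatorname{dist}(a,\Delta_\tau)$. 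As $c\notin\Delta_\tau$ forces $\operatorname{dist}(c,\Delta_\tau)>0$ while $c\in A\setminus\tau$, this contradicts the minimality of $a$. Hence no such $c$ exists, proving the first assertion.

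For the moreover clause, when $\Delta_\tau$ is not full dimensional I would run the same argument but minimize $\operatorname{dist}(a,\Delta_\tau)$ over the smaller set $A\setminus\RR\tau$, which is nonempty because the columns of $A$ span $\RR^d$ while $\RR\tau\subsetneq\RR^d$. Any such minimizer $a$ lies outside $\RR\tau$, so $\dim\Delta_{\tau\cup\{a\}}=\dim\Delta_\tau+1$ automatically. The no-trapping argument carries over verbatim once I check that a trapped column cannot have been excluded by the restriction: if $c=(1-t)q+ta$ with $q\in\Delta_\tau\subseteq\RR\tau$ and $t\in(0,1)$, then the quotient projection $\pi\colon\RR^d\to\RR^d/\RR\tau$ gives $\pi(c)=t\,\pi(a)\neq 0$, so $c\notin\RR\tau$ and $c$ does compete in $A\setminus\RR\tau$, restoring the contradiction.

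The one real subtlety, and the point I would watch most carefully, is exactly this interaction in the moreover part: naively minimizing over all of $A\setminus\tau$ could return a column inside $\RR\tau$ and fail to raise the dimension, so the ground set must be restricted, and one must confirm that the restriction does not let a genuinely trapped column escape detection, which the projection computation guarantees. The convexity of the distance-to-$\Delta_\tau$ function is the structural input doing the real work, converting the geometric "nothing is trapped" condition into a one-line minimality contradiction.
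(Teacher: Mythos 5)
Your proof is correct, and it takes a genuinely different route from the paper's. The paper argues by iterated replacement: start with an arbitrary candidate $a$ (in the degenerate case, an arbitrary dimension-raising one), and if some column $a'$ of $A$ gets trapped in $\Delta_{\tau\cup\{a\}}$, swap $a$ for $a'$; since $\Delta_{\tau\cup\{a'\}}\subsetneq\Delta_{\tau\cup\{a\}}$, the finite set of trapped columns strictly shrinks at each swap, so the process terminates at a valid choice. Your argument replaces this descent by a one-shot extremal choice: minimize $\operatorname{dist}(\,\cdot\,,\Delta_\tau)$ over the appropriate ground set, and use convexity of the distance-to-a-convex-set function on the join parametrization $c=(1-t)q+ta$ to get $\operatorname{dist}(c,\Delta_\tau)\le t\,\operatorname{dist}(a,\Delta_\tau)<\operatorname{dist}(a,\Delta_\tau)$, contradicting minimality. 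The two proofs trade different resources: the paper's is purely order-theoretic (only strict inclusions and finiteness, no metric), while yours imports the Euclidean metric but eliminates the iteration entirely and handles both cases uniformly, with the ground-set restriction to $A\setminus\RR\tau$ plus the projection computation $\pi(c)=t\,\pi(a)\neq 0$ playing exactly the role of the paper's observation that, when $\Delta_\tau$ is a facet of $\Delta_{\tau\cup\{a\}}$, no trapped column can lie in $\RR\tau$. You also correctly flagged the one real pitfall in the degenerate case---that an unrestricted minimizer could land inside $\RR\tau$ and fail to raise the dimension---and closed it; the paper's proof sidesteps the same issue by restricting its initial choice and noting that any replacement column inherits the dimension-raising property.
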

\begin{proof} 
If $\Delta_\tau\subseteq \RR^d$ is full dimemsional, then choose any column $a\in A\setminus \tau$. 
Since $a\notin \Delta_\tau$, the vector $a$ is a vertex of $\Delta_{\tau \cup\{a\}}$, and the rest of the vertices of $\Delta_{\tau \cup\{a\}}$ are vertices of $\Delta_\tau$. 
In particular, if there exists a vector $a'\in (\Delta_{\tau\cup\{a\}}\cap A)\setminus(\tau\cup\{a\})$, then $a'$ is not a vertex of $\Delta_{\tau\cup\{a\}}$ and $\Delta_\tau\subsetneq\Delta_{\tau\cup\{a'\}} \subsetneq \Delta_{\tau\cup\{a\}}$. 
Thus, $a$ can be replaced by $a'$. Also, notice that 
\[
(\Delta_{\tau\cup\{a'\}}\cap A)\setminus(\tau\cup\{a'\})\subsetneq(\Delta_{\tau\cup\{a\}}\cap A)\setminus(\tau\cup\{a\}).
\] 
We can thus repeat this process of replacement of $a$ until the equality $\Delta_{\tau\cup\{a''\}}\cap A=\tau\cup\{a''\}$ holds for some $a''$ in $A\setminus\tau$. 

On the other hand, if $\Delta_\tau\subseteq \RR^d$ is not full dimensional, 
let $a\in A\setminus \tau$ be such that $\dim (\Delta_{\tau\cup\{a\}})=\dim (\Delta_\tau)+1$. Such a choice of $a$ exists because the rank of $A$ is $d$. 
Since $\Delta_\tau$ is a facet of $\Delta_{\tau \cup\{a\}}$ and $\Delta_\tau \cap A=\tau$, no point in $(\Delta_{\tau \cup\{a\}}\cap A)\setminus \tau$ is in the affine span of $\tau$. 
Thus, if there exists a vector $a'\in (\Delta_{\tau\cup\{a\}}\cap A)\setminus(\tau\cup\{a\})$, we can replace $a$ by $a'$ and repeat the process until $(\Delta_{\tau\cup\{a\}}\cap A)=\tau\cup\{a\}$, in a similar way as in the full dimensional case.
\end{proof}

\begin{lemma}\label{lem:volume-lower-bound}
If $F\preceq A$ is a face of $A$, then 
\begin{equation}\label{eqn:volume-F-ineq}
\vol(A)\geq \vol_{\ZZ^d\cap \QQ F}(F)+n-|F| -\codim (F),
\end{equation} where $|F|$ is the cardinality of $F$. In particular, $\vol(A)\geq n-d +1$.
\end{lemma}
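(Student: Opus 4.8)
The plan is to pass from the face $F$ up to all of $A$ by adjoining one column at a time, using Lemma~\ref{lem:adding-a-point} to keep control of the polytopes and tracking how the normalized volume changes at each step. First I would check that the hypotheses of Lemma~\ref{lem:adding-a-point} hold for $\tau_0:=F$: since $F=A\cap\RR F$ and $\Delta_F\subseteq\RR F$, any column of $A$ lying in $\Delta_F$ already lies in $\RR F$, hence in $F$, so $\Delta_F\cap A=F$. (If $F=A$, then \eqref{eqn:volume-F-ineq} reads $\vol(A)\geq\vol(A)$ and there is nothing to prove, so assume $F\subsetneq A$.) Iterating Lemma~\ref{lem:adding-a-point} produces a chain
\[
F=\tau_0\subsetneq\tau_1\subsetneq\cdots\subsetneq\tau_{n-|F|}=A,\qquad |\tau_i|=|F|+i,
\]
with $\Delta_{\tau_i}\cap A=\tau_i$ at every stage. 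Using the second assertion of the lemma, I would arrange that each of the first $\codim(F)$ additions raises the dimension of $\Delta_{\tau_i}$ by one, so that $\Delta_{\tau_{\codim(F)}}$ is full-dimensional; every subsequent addition then keeps the dimension equal to $d$. This splits the $n-|F|$ steps into $\codim(F)$ dimension-raising steps followed by $n-|F|-\codim(F)$ full-dimensional steps, which I would analyze separately.

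For a full-dimensional step, passing from $\tau$ to $\tau\cup\{a\}$, the column $a$ lies outside $\Delta_\tau$ (as $a\in A\setminus\tau$ and $\Delta_\tau\cap A=\tau$), so $\Delta_{\tau\cup\{a\}}$ strictly contains the full-dimensional polytope $\Delta_\tau$ and therefore has strictly larger Euclidean volume. Since $\vol_{\ZZ^d}(\tau)$ and $\vol_{\ZZ^d}(\tau\cup\{a\})$ are both integers, the increase is at least $1$; summing over the $n-|F|-\codim(F)$ full-dimensional steps contributes that many to $\vol(A)$.

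For a dimension-raising step, $\Delta_{\tau\cup\{a\}}$ is the lattice pyramid with base $\Delta_\tau$ and apex $a$, where $a\notin\RR\tau$. Writing $L_m=\ZZ^d\cap\RR\tau$ and $L_{m+1}=\ZZ^d\cap\RR(\tau\cup\{a\})$ and comparing covolumes via $\det(L_{m+1})=\det(L_m)\cdot\delta$, where $\delta$ is the least positive distance from a point of $L_{m+1}$ to $\RR\tau$, the pyramid formula yields $\vol_{L_{m+1}}(\tau\cup\{a\})=h\cdot\vol_{L_m}(\tau)$, where $h\geq1$ is the lattice height of $a$ over $\RR\tau$. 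In particular the normalized volume never decreases across these steps, so $\vol_{\ZZ^d}(\tau_{\codim(F)})\geq\vol_{\ZZ^d\cap\QQ F}(F)$. Combining the two analyses gives
\[
\vol(A)=\vol_{\ZZ^d}(A)\geq\vol_{\ZZ^d}(\tau_{\codim(F)})+\bigl(n-|F|-\codim(F)\bigr)\geq\vol_{\ZZ^d\cap\QQ F}(F)+n-|F|-\codim(F),
\]
which is \eqref{eqn:volume-F-ineq}. The ``in particular'' statement then follows by taking $F=\nothing$, which is a face since $\RR_{\geq0}A$ is pointed and $0\notin A$: here $\codim(\nothing)=d$, $|\nothing|=0$, and $\Delta_\nothing=\{\0\}$ has normalized volume $1$, so $\vol(A)\geq 1+n-d$.

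I expect the dimension-raising step to be the main obstacle: one must verify that forming the pyramid multiplies the normalized volume by the positive integer lattice height of the apex, and in particular cannot decrease it. This rests on the covolume identity for the corank-one lattice extension $L_m\subseteq L_{m+1}$ together with the fact that an apex off the base hyperplane has lattice height at least $1$. By contrast, the full-dimensional steps are routine once one observes that $a\notin\Delta_\tau$.
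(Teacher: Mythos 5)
Your proof is correct and follows essentially the same route as the paper's: starting from $\Delta_F\cap A=F$, iterate Lemma~\ref{lem:adding-a-point} to first reach a full-dimensional polytope via $\codim(F)$ dimension-raising additions (during which the normalized volume cannot drop below $\vol_{\ZZ^d\cap\QQ F}(F)$), and then gain at least $1$ in normalized volume for each of the remaining $n-|F|-\codim(F)$ columns. The only difference is that you spell out, via the lattice-height/pyramid computation, the monotonicity claim that the paper asserts without proof, which is a welcome addition rather than a deviation.
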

\begin{proof}
Since $F\preceq A$ is a face of $A$, $\Delta_F\cap A=F$. 
By Lemma~\ref{lem:adding-a-point}, there is a set $\sigma$ of $\codim(F)$ linearly independent columns of $A\setminus F$ such that $\Delta_{F\cup\sigma}$ is full dimensional and $\Delta_{F\cup\sigma}\cap A=F\cup\sigma$. 
The normalized volume in the lattice $\ZZ^d$ of $F\cup\sigma$ is at least $\vol_{\ZZ^d\cap \QQ F}(F)$. To see this, denote $\widetilde{F}=\ZZ^d \cap \Delta_F$ and notice that $\RR F \cap \ZZ^d=\ZZ \widetilde{F}$ and $\Delta_{\widetilde{F}}=\Delta_F$. Thus, using \eqref{eqn:normalized-volume} we obtain
\[
\vol_{\ZZ^d}(F\cup \sigma )=\vol_{\ZZ^d}(\widetilde{F}\cup \sigma )\geq \vol_{\ZZ (\widetilde{F}\cup \sigma)}(\widetilde{F}\cup \sigma )\geq \vol_{\ZZ \widetilde{F}}(\widetilde{F})=\vol_{\ZZ^d \cap \RR F}(F),
\]
where the first inequality follows from the contaiment $\ZZ (\widetilde{F}\cup \sigma)\subseteq \ZZ^d$ and the second one follows from \cite[Lemma 3.13]{duality}.

Again by Lemma~\ref{lem:adding-a-point}, there is a column $a$ of $A\setminus (F\cup \sigma)$ such that no other column of $A\setminus (F\cup \sigma)$ lies in 
$\Delta_{F\cup\sigma\cup\{a\}}$, the convex hull of the $\codim(F)+|F|+1$ points of $F\cup \sigma\cup\{a\}$ and the origin. 
In fact, $n-(\codim(F)+|F|+1)$ more columns of $A\setminus(F\cup\sigma)$ can be iteratively found in this way. Notice that each time a new point is added to $F\cup\sigma$ using Lemma~\ref{lem:adding-a-point}, the normalized volume of the convex hull of the new set is increased at least by one. This proves the first statement. The second statement follows from the first one by taking $F=\nothing$.
\end{proof}

\begin{remark}\label{rem:pyramid}
Notice that for any face $F\preceq A$, $n-|F| -\codim (F)\geq 0$. If equality holds, we say that $A$ is a \emph{pyramid} over $F$. By \cite[Lemma 3.5]{reducibility}, $A$ is a pyramid over $F$ if and only if $\ZZ^d= \ZZ F \oplus \left(\bigoplus_{j\notin F} \ZZ a_j\right)$. Further, if $A$ is a pyramid over $F$, then equality holds in \eqref{eqn:volume-F-ineq} because $\ZZ^d\cap \QQ F=\ZZ F$ and $\vol(A)=\vol_{\ZZ F}(F)$, see \cite[Lemma 3.5]{reducibility}. 
The converse is not true; a counterexample is provided in Remark \ref{rem:remark-minimal-volume}.
\end{remark}


On the other hand, if equality holds in \eqref{eqn:volume-F-ineq}, then all the lattice points in $\Delta_A\setminus \Delta_F$ are columns of $A$. Indeed, if there is a lattice point $a \in \Delta_A\setminus \Delta_F$ which is not a column of $A$, then a matrix $A'$ obtained by adding to $A$ the column $a$ would have $n+1$ columns and $\vol(A)=\vol(A')$, so inequality \eqref{eqn:volume-F-ineq} applied to $A'$ shows that the inequality corresponding to $A$ cannot be an equality in this case.

Denote the toric ring associated to $A$ by $S_A\defeq\CC[\del]/I_A\cong \CC [\NN A]$. 

\begin{prop}\label{prop:volume-normal}
If $\vol(A)= n-d+1$, then $S_A$ is normal.
\end{prop}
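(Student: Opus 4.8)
The plan is to show that the equality $\vol(A)=n-d+1$ forces $\Delta_A$ to admit a \emph{unimodular triangulation} whose vertices all lie in $\{\0\}\cup A$, and then to read normality of $\NN A$ off such a triangulation. The starting point is the construction inside the proof of Lemma~\ref{lem:volume-lower-bound} applied to $F=\nothing$: it produces a full-dimensional lattice simplex $\Delta_\sigma$ on $d$ linearly independent columns $\sigma$, together with an ordering $a_{d+1},\dots,a_n$ of the remaining columns such that each $\Delta_{\sigma\cup\{a_{d+1},\dots,a_k\}}$ strictly contains its predecessor. Since the total normalized volume is exactly $1+(n-d)$, while the base contributes at least $1$ and each inserted column contributes at least $1$, equality forces $\vol_{\ZZ^d}(\sigma)=1$ (so $\Delta_\sigma$ is unimodular) and forces each insertion to increase the normalized volume by exactly $1$.

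The heart of the argument is to promote this ``increase by $1$'' data into an honest unimodular triangulation, maintained under \emph{placing}. Suppose $P=\Delta_{\sigma\cup\{a_{d+1},\dots,a_k\}}$ already carries a unimodular triangulation $T$ with vertices in $\{\0\}\cup A$, and set $a=a_{k+1}$, which lies strictly outside $P$. Placing $a$ replaces $T$ by $T$ together with the simplices $\conv(a,G)$, where $G$ ranges over the boundary facets of $T$ visible from $a$; this is a triangulation of $\conv(P\cup\{a\})=\Delta_{\sigma\cup\{a_{d+1},\dots,a_{k+1}\}}$. Each new simplex $\conv(a,G)$ is a full-dimensional lattice simplex, hence has normalized volume at least $1$, and the new simplices have disjoint interiors tiling the closure of $\conv(P\cup\{a\})\setminus P$, so their normalized volumes sum to the increment, which is $1$. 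Therefore exactly one facet $G$ is visible from $a$ and $\conv(a,G)$ is unimodular, so the enlarged triangulation is again unimodular with vertices in $\{\0\}\cup A$. Starting from the unimodular simplex $\Delta_\sigma$ and iterating yields a unimodular triangulation of $\Delta_A$ with all vertices in $\{\0\}\cup A$.

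It then remains to pass from this triangulation to normality of $S_A=\CC[\NN A]$. To avoid the degeneracy caused by $\0$ being a vertex of $\Delta_A$, I would homogenize: let $\hat A$ be the $(d+1)\times(n+1)$ matrix with columns $\hat a_0=(1,\0)$ and $\hat a_j=(1,a_j)$ for $1\leq j\leq n$, so that $\ZZ\hat A=\ZZ^{d+1}$ and $\RR_{\geq0}\hat A$ is the cone over $\{1\}\times\Delta_A$. The unimodular triangulation of $\Delta_A$ lifts to one of $\{1\}\times\Delta_A$ with vertices among the $\hat a_j$; each of its maximal simplices is spanned by $d+1$ columns of $\hat A$ forming a $\ZZ$-basis of $\ZZ^{d+1}$, so coning from the origin covers $\RR_{\geq0}\hat A$ by unimodular simplicial cones generated by columns of $\hat A$. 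This gives $\NN\hat A=\ZZ^{d+1}\cap\RR_{\geq0}\hat A$, i.e. $\NN\hat A$ is normal. Normality then descends: given $v\in\ZZ^d\cap\RR_{\geq0}A$, choose an integer $k$ with $(k,v)\in\RR_{\geq0}\hat A$, so that $(k,v)\in\ZZ^{d+1}\cap\RR_{\geq0}\hat A=\NN\hat A$; reading off the last $d$ coordinates of any expression $(k,v)=\sum_j m_j\hat a_j$ with $m_j\in\NN$ exhibits $v=\sum_{j\geq1}m_j a_j\in\NN A$. Hence $\NN A=\ZZ^d\cap\RR_{\geq0}A$ is saturated and $S_A$ is normal. (Equivalently, one could homogenize first and observe that $X_{\hat A}$ is a nondegenerate projective toric variety of degree $\vol(\hat A)=\codim+1$, hence of minimal degree, and invoke projective normality of such varieties; the triangulation route above is self-contained.)

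I expect the main obstacle to be the triangulation step: extracting a genuine unimodular triangulation from the volume bookkeeping of Lemma~\ref{lem:volume-lower-bound}. The crux is the observation that when the normalized volume jumps by exactly $1$ upon placing a lattice point, only a single boundary facet can be visible and the resulting pyramid must be unimodular; verifying that placing produces a valid triangulation at each stage, so that the volume-additivity used is legitimate, is the point requiring the most care.
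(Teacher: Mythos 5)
Your proposal is correct, but it takes a genuinely different route from the paper's proof, even though both run on the same fuel: the volume bookkeeping of Lemmas \ref{lem:adding-a-point} and \ref{lem:volume-lower-bound}, which under $\vol(A)=n-d+1$ forces the base simplex to be unimodular and every incremental volume to equal exactly $1$. The paper never triangulates $\Delta_A$. Instead it fixes an affine hyperplane $H$ whose section $\tau=H\cap A$ has all remaining columns strictly beyond it (so $\conv(\tau)$ spans a face of $\conv(A)$ visible from the origin), uses the same counting to show that every full-dimensional simplex $\sigma\subseteq\tau$ with $\Delta_\sigma\cap A=\sigma$ is unimodular, and then covers $\RR_{\geq 0}A$ directly by the cones $\RR_{\geq 0}\sigma$ arising from all such sections; a lattice point of $\RR_{\geq 0}A$ then lies in some $\NN\sigma\subseteq\NN A$ with no further work. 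You instead build a global unimodular placing triangulation of the polytope $\Delta_A$ with vertices in $\{\0\}\cup A$ --- your observation that a volume increment of exactly $1$ forces a single visible facet and a unimodular new cell is the key new ingredient, and the beneath-beyond step you lean on is indeed standard --- and then you must homogenize to $\hat A$ and descend, precisely because cells of a triangulation of $\Delta_A$ that do not contain the origin do not yield subcones of $\RR_{\geq 0}A$ spanned by columns of $A$. What each approach buys: the paper's argument is shorter and stays entirely inside the cone $\RR_{\geq 0}A$, its only global input being the standard fact that the cones over the origin-visible sections cover that cone; yours requires the triangulation machinery plus the homogenization and descent, but it delivers a strictly stronger structural conclusion, namely that $\Delta_A$ admits a unimodular triangulation on the vertex set $\{\0\}\cup A$, so that the homogenized semigroup $\NN\hat A$ is also normal, with normality of $\NN A$ falling out as a corollary.
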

\begin{proof}
Let $H$ be an affine hyperplane such that $\tau\defeq H\cap A$ satisfies that $\Delta_\tau$ is full dimensional and all the columns of $A$ not in $\tau$ belong to the open half space determined by $H$ not containing the origin; such a hyperplane exists because $\NN A$ is positive and $\bf{0}$ is not a column of $A$. 
For any $\sigma\subseteq \tau$ of cardinality $d$ such that $\Delta_\sigma$ is a full dimemsional simplex and $\Delta_{\sigma}\cap A=\sigma$, $\vol_{\ZZ^d}(\sigma)=1$ because otherwise, by adding a point of $A\setminus\sigma$ using Lemma~\ref{lem:adding-a-point} and taking the convex hull iteratively, the volume would increase by at least one in each step and the normalized volume of $A$ would be larger than $n-d+1$. 
Now, since $\vol_{\ZZ^d}(\sigma)=1$, $\sigma$ forms a basis in the lattice $\ZZ^d$ and $\NN \sigma =\ZZ^d \cap \RR_{\geq 0} \sigma$. Since $\RR_{\geq 0}A$ equals the union of the cones $\RR_{\geq 0}\sigma$ for simplices $\sigma\subseteq \tau$ with $\tau$ as above, it follows that $\RR_{\geq 0}A\cap \ZZ^d =\NN A$, and hence, $S_A$ is normal.
\end{proof}

\begin{cor}\label{cor:nonCM-inequality}
If $S_A$ is not Cohen--Macaulay, then $d\geq 2$, $n\geq d+2$, and $\vol(A)\geq n-d+2$.
\end{cor}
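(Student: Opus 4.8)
The plan is to establish each of the three conclusions through its contrapositive, reducing everything to the two results already proved together with the classical fact that a normal affine semigroup ring is Cohen--Macaulay (Hochster). Throughout I would use that $S_A=\CC[\NN A]$ is a domain of Krull dimension $d$, since $I_A$ is a toric (hence prime) ideal, and that $n\geq d$ because $A$ has rank $d$.

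For the volume bound, Lemma~\ref{lem:volume-lower-bound} already gives $\vol(A)\geq n-d+1$, so it is enough to exclude equality. But Proposition~\ref{prop:volume-normal} asserts that $\vol(A)=n-d+1$ forces $S_A$ to be normal, and normal semigroup rings are Cohen--Macaulay. Hence, if $S_A$ is not Cohen--Macaulay, the inequality is strict and $\vol(A)\geq n-d+2$.

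For the bound $d\geq 2$, I would treat the case $d=1$: then $S_A$ is a one-dimensional domain, so any nonzero element of its graded maximal ideal is a nonzerodivisor, giving $\operatorname{depth} S_A\geq 1=\dim S_A$; thus $S_A$ is Cohen--Macaulay. For the bound $n\geq d+2$, I would rule out the two remaining possibilities $n\in\{d,d+1\}$. If $n=d$, the columns of $A$ are linearly independent, so $\ker_{\ZZ}(A)=0$, whence $I_A=0$ and $S_A$ is a polynomial ring. If $n=d+1$, then $\ker_{\ZZ}(A)$ is a saturated sublattice of $\ZZ^n$ of rank one (saturated because $\ZZ^n/\ker_{\ZZ}(A)\cong\ZZ A=\ZZ^d$ is torsion-free); being generated by a single primitive vector $w$, the associated lattice ideal, which equals $I_A$, is the principal ideal $(\del^{w_+}-\del^{w_-})$. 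In either case $S_A$ is a complete intersection, hence Cohen--Macaulay. Combining the three contrapositives yields the corollary.

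I do not expect a serious obstacle here: the argument is a short case analysis, and the only external ingredient is Hochster's theorem that normality implies Cohen--Macaulayness. The point requiring the most care is the claim that $I_A$ is principal when $n=d+1$; I would justify it by identifying $I_A$ with the lattice ideal of the rank-one saturated lattice $\ker_{\ZZ}(A)$ and recalling that the lattice ideal of a lattice generated by a single vector is generated by the corresponding single binomial.
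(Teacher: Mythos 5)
Your proof is correct and takes essentially the same route as the paper: the main bound $\vol(A)\geq n-d+2$ is obtained exactly as in the paper's proof, by combining Lemma~\ref{lem:volume-lower-bound}, Proposition~\ref{prop:volume-normal}, and Hochster's theorem. The only difference is one of detail: where the paper simply asserts that $d=1$ or $n-d=1$ forces $S_A$ to be Cohen--Macaulay under its standing hypotheses, you supply correct justifications (the depth argument for the one-dimensional domain when $d=1$, the polynomial ring when $n=d$, and the principal toric ideal---hence complete intersection---when $n=d+1$), even covering the case $n=d$ that the paper's proof passes over silently.
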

\begin{proof}
If either $d=1$ or $n-d=1$, then under our hypotheses on $A$, $S_A$ is Cohen--Macaulay. On the other hand, if $\vol(A)< n-d+2$, then $S_A$ is normal by Lemma~\ref{lem:volume-lower-bound} and Proposition~\ref{prop:volume-normal}, which implies that $S_A$ is Cohen--Macaulay by~\cite[Theorem 1]{Hochster}.
\end{proof}

The inequality in Corollary~\ref{cor:nonCM-inequality} is sharp; for any $d\geq 2$ and $n\geq d+2$, there is a pointed matrix $A$ as above with $\vol(A)=n-d+2$ such that $S_A$ is not Cohen--Macaulay. 
To see this, notice first that for $d=2$ and $n=d+2=4$, the matrix 
\[
A=\left(\begin{array}{cccc}
1 & 1 & 0 & 0\\
0 & 1 & 2 & 3\end{array}\right)
\] 
satisfies that $\vol(A)=4$ and $S_A$ is not Cohen--Macaulay. On the other hand, in order to produce examples with $n\geq 5$, it is enough to modify this example by adding the columns $(0,k)^t$ for $k=4, \ldots, n-1$, and this operation keeps $S_A$ invariant up to isomorphism.
To construct more examples with the same value of $n-d$ but larger $d$, it is enough to consider a pyramid over the previous example. This alters $S_A$ by tensoring over $\CC$ with a polynomial ring in a number of variables equal to the increment of $d$.

\section{Rank versus volume in the simple case}
\label{sec:Simple-rank-jumping}

\subsection{Combinatorics of the rank}\label{subsection1}
In this subsection, we recall some notations and results from \cite{berkesch} and a formula for the rank of an $A$-hypergeometric system in a particular case, see \eqref{eqn:formula-simple-rank-jump}, proved in \cite{okuyama}. 

For a face $F\preceq A$, consider the union of the lattice translates 
\begin{align*}
\EE_F^\beta\defeq 
\big[\ZZ^d\cap(\beta+\CC F) \big]\minus(\NN A+\ZZ F) = \bigsqcup_{b\in B_F^\beta} (b+\ZZ F),
\end{align*}
where $B_F^\beta \subseteq \ZZ^d$ is a set of lattice translate representatives. 
As such, $|B^\beta_{F}|$ is the number of translates of $\ZZ F$ appearing in $\bbE_F^\beta$, which is by definition equal to the difference between $[\ZZ^d\cap \QQ F:\ZZ F]$ and the number of translates of $\ZZ F$ along $\beta+\CC F$ that are contained in $\NN A + \ZZ F$.

Given the set
$\mathcal{J}(\beta)\defeq\{(F,b)\mid F\preceq A,\, b\in B_F^\beta \}$, 
the \emph{ranking lattices} of $A$ at $\beta$ are defined to be 
\begin{align*}
\EE^\beta 
  \defeq \bigcup_{(F,b)\in \cJ(\beta)} (b+\ZZ F).
\end{align*}
Note that the ranking lattices of $A$ at $\beta$ is precisely the union of those sets $(b+\ZZ F)$ contained in $\ZZ^d \setminus \NN A$ such that $\beta\in (b+\CC F)$. This is closely related to the set of holes of the affine semigroup $\NN A$, namely the set $(\ZZ^d \cap \RR_{\geq 0}A)\setminus \NN A$. 

\begin{definition}
\label{def:simple}
A rank jumping parameter $\beta$ is \emph{simple} (for a face $G\preceq A$) if the set of maximal pairs $(F,b)$ in $\cJ(\beta)$ with respect to inclusion on $b+\ZZ F$ all correspond to a unique face $G\preceq A$.
\end{definition}

The main result in \cite{berkesch} states that the rank of $M_A(\beta)$ can be computed from the combinatorics of $\EE^\beta$ and $\Delta_A$. An explicit formula for the rank is given when the rank jumping parameter $\beta$ is \emph{simple} for a face $G\preceq A$ (see also \cite{okuyama} for this particular case); in this case, 
\begin{equation}\label{eqn:formula-simple-rank-jump}
\rank (M_A (\beta))=\vol(A)+ |B_G^\beta|\cdot (\codim(G)-1)\cdot \vol_{\ZZ G}(G).
\end{equation}

\subsection{Some upper bounds for the rank}\label{subsection2}
In this subsection, we use \eqref{eqn:formula-simple-rank-jump} to provide upper bounds for the rank of an $A$-hypergeometric system $M_A(\beta)$ when the parameter $\beta$ is simple, see Corollaries \ref{cor:upper-bound-F-simple-rank} and \ref{cor:sharper-upper-bound-F-simple-rank}. We also prove that $\rank(M_A(\beta)) < 2 \cdot\vol (A)$ if $\beta$ lies outside an affine subspace arrangement of codimension at least three, see Theorem \ref{thm:codimension-three}. 

\begin{cor}\label{cor:upper-bound-F-simple-rank}
If $\beta\in\CC^d$ is simple for the face $F\preceq A$, then 
\[
\rank (M_A (\beta))\leq \codim(F)\cdot \vol(A). 
\]
In particular, if $d\geq 3$ and $\beta\in\cE(A)$ is simple, then 
\[
\rank (M_A (\beta))\leq (d-1)\cdot \vol(A).
\]
\end{cor}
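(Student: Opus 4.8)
The plan is to deduce Corollary~\ref{cor:upper-bound-F-simple-rank} directly from the explicit rank formula \eqref{eqn:formula-simple-rank-jump} together with the volume lower bound supplied by Lemma~\ref{lem:volume-lower-bound}. Assuming $\beta$ is simple for the face $F\preceq A$, the formula reads
\[
\rank (M_A (\beta))=\vol(A)+ |B_F^\beta|\cdot (\codim(F)-1)\cdot \vol_{\ZZ F}(F),
\]
so the entire task reduces to bounding the ``extra'' summand $|B_F^\beta|\cdot (\codim(F)-1)\cdot \vol_{\ZZ F}(F)$ by $(\codim(F)-1)\cdot\vol(A)$. After canceling the common factor $\codim(F)-1$ (which is nonnegative, and in the rank-jumping case is positive since a proper face has $\codim(F)\geq 1$), it suffices to prove the purely combinatorial inequality $|B_F^\beta|\cdot \vol_{\ZZ F}(F)\leq \vol(A)$.

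The first step is to control $|B_F^\beta|$. By the definition recalled in Section~\ref{sec:Simple-rank-jumping}, $|B_F^\beta|$ is the number of translates of $\ZZ F$ appearing in $\EE_F^\beta$, and it is bounded above by the index $[\ZZ^d\cap\QQ F:\ZZ F]$. Using the definition of normalized volume \eqref{eqn:normalized-volume}, one has the identity $\vol_{\ZZ F}(F)=[\ZZ^d\cap\QQ F:\ZZ F]\cdot\vol_{\ZZ^d\cap\QQ F}(F)$, so the product satisfies
\[
|B_F^\beta|\cdot \vol_{\ZZ F}(F)\leq [\ZZ^d\cap\QQ F:\ZZ F]\cdot\vol_{\ZZ F}(F)
= [\ZZ^d\cap\QQ F:\ZZ F]^2\cdot\vol_{\ZZ^d\cap\QQ F}(F).
\]
This is not yet obviously $\leq\vol(A)$, so I would instead bound $|B_F^\beta|\cdot\vol_{\ZZ F}(F)$ more tightly. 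The cleanest route is to observe that $|B_F^\beta|\cdot\vol_{\ZZ F}(F)$ counts lattice points (with the appropriate lattice index weighting) lying in translates of the sublattice generated by $F$ inside the relevant slice, and that these contribute to the normalized volume of the full simplex; concretely, I would show $|B_F^\beta|\cdot\vol_{\ZZ F}(F)\leq \vol_{\ZZ^d\cap\QQ F}(F)\cdot[\ZZ^d\cap\QQ F:\ZZ F]\leq\vol(A)$, where the last inequality is exactly \eqref{eqn:volume-F-ineq} from Lemma~\ref{lem:volume-lower-bound} in the form $\vol_{\ZZ^d\cap\QQ F}(F)\leq\vol(A)$ combined with the fact that the number of genuinely ``missing'' translates is at most $\vol(A)/\vol_{\ZZ^d\cap\QQ F}(F)$.

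The second, cleaner observation handles the final statement: if $\beta\in\cE(A)$ is simple, then it is simple for \emph{some} face $F$, and since $\beta$ is a rank-jumping parameter the corresponding face must be proper, so $\codim(F)\geq 1$; because $d\geq 3$ and $F$ cannot be all of $A$ (otherwise the extra term vanishes and $\beta\notin\cE(A)$), we have $\codim(F)\leq d-1$, whence $\rank(M_A(\beta))\leq\codim(F)\cdot\vol(A)\leq(d-1)\cdot\vol(A)$.

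The main obstacle I anticipate is the first inequality $|B_F^\beta|\cdot\vol_{\ZZ F}(F)\leq\vol(A)$: the two naive bounds (bounding $|B_F^\beta|$ by the lattice index, and bounding $\vol_{\ZZ^d\cap\QQ F}(F)$ by $\vol(A)$) pull in opposite directions, since the index enters $\vol_{\ZZ F}(F)$ in the numerator. The resolution must use that $|B_F^\beta|$ counts only the \emph{missing} translates—those not contained in $\NN A+\ZZ F$—and that each such missing translate forces a corresponding unit of volume in $\Delta_A$ beyond $\Delta_F$. Making this geometric accounting precise, so that the product telescopes correctly against the normalized volume of the whole configuration via Lemma~\ref{lem:volume-lower-bound}, is the delicate part of the argument.
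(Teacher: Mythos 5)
Your overall strategy is the paper's: combine the simple-parameter rank formula \eqref{eqn:formula-simple-rank-jump} with the bound $|B_F^\beta|\cdot\vol_{\ZZ F}(F)\leq\vol(A)$. But your proposal never actually establishes that bound, and the obstacle you run into is an artifact of a reversed identity. In the definition \eqref{eqn:normalized-volume} the index $[\ZZ^d\cap\QQ F:\Lambda]$ sits in the \emph{denominator}, so the finer lattice has the larger normalized volume:
\[
\vol_{\ZZ^d\cap\QQ F}(F)=[\ZZ^d\cap\QQ F:\ZZ F]\cdot\vol_{\ZZ F}(F),
\]
not $\vol_{\ZZ F}(F)=[\ZZ^d\cap\QQ F:\ZZ F]\cdot\vol_{\ZZ^d\cap\QQ F}(F)$ as you wrote. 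With the identity in the correct direction the chain closes immediately:
\[
|B_F^\beta|\cdot\vol_{\ZZ F}(F)\;\leq\;[\ZZ^d\cap\QQ F:\ZZ F]\cdot\vol_{\ZZ F}(F)\;=\;\vol_{\ZZ^d\cap\QQ F}(F)\;\leq\;\vol(A),
\]
where the last step is Lemma~\ref{lem:volume-lower-bound} together with $n-|F|-\codim(F)\geq0$; this is exactly the paper's argument, and no ``delicate geometric accounting'' remains. Moreover, the repair you propose is false: for the paper's own matrices $A_{d,b}$ with $F=\{b e_d\}$, the index is $b$ and $\vol_{\ZZ^d\cap\QQ F}(F)=b$, so your middle term $[\ZZ^d\cap\QQ F:\ZZ F]\cdot\vol_{\ZZ^d\cap\QQ F}(F)$ equals $b^2$, which exceeds $\vol(A_{d,b})=b+d-1$ once $b$ is large; likewise your auxiliary ``fact'' that $|B_F^\beta|\leq\vol(A)/\vol_{\ZZ^d\cap\QQ F}(F)$ fails there, since $|B_F^\beta|=b-1$ for $\beta\in\CC F$ while the right-hand side is $(b+d-1)/b$.

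The second statement has a separate gap. From ``$F$ cannot be all of $A$'' you conclude $\codim(F)\leq d-1$; that is backwards. $F\neq A$ gives $\codim(F)\geq1$, whereas $\codim(F)\leq d-1$ requires $F\neq\nothing$, and the empty face, which has codimension $d$ by the paper's convention, is a genuine possibility for a simple rank-jumping parameter (it occurs when the maximal ranking lattices are isolated points). In that case your first inequality only yields $\rank(M_A(\beta))\leq d\cdot\vol(A)$, weaker than claimed. The paper treats $\codim(F)=d$ separately: there $|B_F^\beta|=1=\vol_{\ZZ F}(F)$, so $\rank(M_A(\beta))=\vol(A)+d-1$, and this is at most $(d-1)\cdot\vol(A)$ because one may assume $\vol(A)\geq2$ (if $\vol(A)=1$ then $A$ is a simplex and $\cE(A)=\nothing$) and $d\geq3$. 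Your proposal is missing both this case and the reduction to $\vol(A)\geq2$ that it relies on.
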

\begin{proof}
The first statement follows from~\eqref{eqn:formula-simple-rank-jump} and the definition of normalized volume in \eqref{eqn:normalized-volume}. Indeed,
\begin{equation}\label{eqn:inequalities}
|B_F^\beta|\cdot \vol_{\ZZ F}(F)\leq [\ZZ^d \cap \QQ F: \ZZ F]\cdot \vol_{\ZZ F}(F)
=\vol_{\QQ F\cap \ZZ^d}(F)\leq \vol(A).
\end{equation}
We can assume without loss of generality that $\vol(A)\geq 2$, since otherwise $A$ is a simplex and $\cE(A)=\nothing$. 

For the second statement, notice first that if $\codim(F)=d$, then 
$\vol_{\ZZ F}(F)=1=|B_F^\beta|$ and 
\begin{equation}
\rank (M_A(\beta))=\vol(A)+d-1 \leq (d-1)\cdot \vol(A),\label{eqn:ineq2}
\end{equation} 
since $d\geq 3$ and $\vol(A)\geq 2$. 
Thus, we can assume that $\codim(F)\leq (d-1)$ and the second upper bound follows from the first one.
\end{proof}

We can improve the bound in Corollary~\ref{cor:upper-bound-F-simple-rank} as follows.

\begin{cor}\label{cor:sharper-upper-bound-F-simple-rank}
If $d\geq3$ and $\beta\in\CC^d$ is simple for the face $F\preceq A$, then 
\begin{equation}\label{eqn:sharper-inequality}
\rank (M_A (\beta))\leq \codim(F)\cdot \vol(A)-(\codim(F)-1)(n-|F|-\codim(F)). 
\end{equation}
In particular, if $\beta\in\cE(A)$ is simple, then 
\begin{equation}\label{eqn:strict-inequality-for-simple}
\frac{\rank (M_A (\beta))}{\vol(A)} < (d-1).
\end{equation}
\end{cor}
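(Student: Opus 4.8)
The plan is to derive the sharper bound \eqref{eqn:sharper-inequality} directly from the rank formula \eqref{eqn:formula-simple-rank-jump} by inserting the two volume estimates already on hand, and then to read off the strict inequality \eqref{eqn:strict-inequality-for-simple} through a short case analysis on $\codim(F)$, in which a single boundary case must be excluded by hand.

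For \eqref{eqn:sharper-inequality}, write $c=\codim(F)$ and note $c\geq 1$, since the face for which $\beta$ is simple is necessarily proper (one has $\EE_A^\beta=\nothing$, so $F\neq A$). Then \eqref{eqn:formula-simple-rank-jump} reads $\rank(M_A(\beta))=\vol(A)+(c-1)\cdot|B_F^\beta|\cdot\vol_{\ZZ F}(F)$. Because $c-1\geq 0$, I may replace $|B_F^\beta|\cdot\vol_{\ZZ F}(F)$ by any upper bound: the chain \eqref{eqn:inequalities} gives $|B_F^\beta|\cdot\vol_{\ZZ F}(F)\leq\vol_{\QQ F\cap\ZZ^d}(F)$, and Lemma \ref{lem:volume-lower-bound} gives $\vol_{\QQ F\cap\ZZ^d}(F)\leq\vol(A)-(n-|F|-c)$. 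Substituting yields $\rank(M_A(\beta))\leq\vol(A)+(c-1)\bigl(\vol(A)-(n-|F|-c)\bigr)=c\cdot\vol(A)-(c-1)(n-|F|-c)$, which is precisely \eqref{eqn:sharper-inequality}.

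For \eqref{eqn:strict-inequality-for-simple}, assume $\beta\in\cE(A)$ and set $m=n-|F|-c\geq 0$ (nonnegative by the remark after Lemma \ref{lem:volume-lower-bound}). Rank jumping forces $c\geq 2$ in \eqref{eqn:formula-simple-rank-jump}. Moreover $\cE(A)\neq\nothing$ implies that $S_A$ is not Cohen--Macaulay \cite{MMW}, so Corollary \ref{cor:nonCM-inequality} gives $n\geq d+2$ and $\vol(A)\geq n-d+2\geq 4$. I would then split on $c$, using $2\leq c\leq d$. If $2\leq c\leq d-2$, then \eqref{eqn:sharper-inequality} gives $\rank(M_A(\beta))\leq c\cdot\vol(A)\leq(d-2)\vol(A)<(d-1)\vol(A)$. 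If $c=d-1$ and $m\geq 1$, then \eqref{eqn:sharper-inequality} gives $\rank(M_A(\beta))\leq(d-1)\vol(A)-(d-2)m<(d-1)\vol(A)$, since $(d-2)m\geq d-2\geq 1$. If $c=d$, i.e.\ $F=\nothing$, then $|B_F^\beta|=\vol_{\ZZ F}(F)=1$, so \eqref{eqn:formula-simple-rank-jump} gives the exact value $\rank(M_A(\beta))=\vol(A)+(d-1)$; as $\vol(A)\geq 4$ and $d\geq 3$ we have $(d-2)\vol(A)\geq 4(d-2)>d-1$, hence $\rank(M_A(\beta))<(d-1)\vol(A)$.

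The remaining case $c=d-1$, $m=0$ is the main obstacle: here \eqref{eqn:sharper-inequality} only yields $\rank(M_A(\beta))\leq(d-1)\vol(A)$, with equality a priori possible, so strictness cannot come from the inequality itself. I would dispose of it by showing the case is vacuous. By the remark after Lemma \ref{lem:volume-lower-bound}, $m=0$ forces $A$ to be a pyramid over $F$, whence $S_A\cong\CC[\NN F]\otimes_\CC\CC[y_j\mid j\notin F]$ is a polynomial extension of $\CC[\NN F]$ and is Cohen--Macaulay exactly when $\CC[\NN F]$ is. But $c=d-1$ means $\dim(\RR F)=1$, so $\CC[\NN F]$ is a one-dimensional affine semigroup domain and therefore Cohen--Macaulay; hence $S_A$ is Cohen--Macaulay and $\cE(A)=\nothing$ by \cite{MMW}, contradicting $\beta\in\cE(A)$. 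This contradiction closes the last case, and \eqref{eqn:strict-inequality-for-simple} follows in all cases.
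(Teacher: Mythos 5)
Your proof is correct, and most of it coincides with the paper's: the derivation of \eqref{eqn:sharper-inequality} from \eqref{eqn:formula-simple-rank-jump}, the first inequality in \eqref{eqn:inequalities}, and Lemma \ref{lem:volume-lower-bound} is exactly the paper's argument, as is your handling of $\codim(F)=1$ (no rank jump) and $\codim(F)=d$ (where the paper also uses Corollary \ref{cor:nonCM-inequality} to get $\vol(A)\geq 4$ and make \eqref{eqn:ineq2} strict). Where you genuinely diverge is the pyramid case. The paper proves that $n-|F|-\codim(F)\geq 1$ for \emph{every} simple rank-jumping parameter with $2\leq\codim(F)\leq d-1$: assuming $A$ is a pyramid over $F$, it invokes \cite[Lemma 3.7]{reducibility} to get $\rank(M_A(\beta))=\rank(M_F(\beta_F))$, notes that simplicity and the rank persist for generic $\beta'\in\beta+\CC F$, and contradicts Adolphson's theorem that generic parameters for $M_F$ have rank $\vol_{\ZZ F}(F)$. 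You instead observe that the pyramid case only needs to be excluded when $\codim(F)=d-1$ (for $\codim(F)\leq d-2$ your bound is strict regardless of $m$), and in that case $\dim(\RR F)=1$, so $S_A\cong\CC[\NN F]\otimes_\CC\CC[y_j\mid j\notin F]$ is a polynomial ring over a one-dimensional affine semigroup domain, hence Cohen--Macaulay, forcing $\cE(A)=\nothing$ by \cite[Corollary 9.2]{MMW}, a contradiction. Your route is more elementary and self-contained: it avoids \cite[Lemma 3.7]{reducibility}, the genericity-of-simplicity argument, and \cite{adolphson}, reusing only facts already present in the proof (the pyramid characterization of $m=0$, \cite[Corollary 9.2]{MMW}, and standard commutative algebra). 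What the paper's heavier argument buys is a stronger structural fact, namely that $A$ is never a pyramid over $F$ for any simple rank-jumping parameter with $2\leq\codim(F)\leq d-1$, so that $n-|F|-\codim(F)\geq 1$ and \eqref{eqn:sharper-inequality} is itself strictly better than the bound of Corollary \ref{cor:upper-bound-F-simple-rank} in all those cases, not just when $\codim(F)=d-1$.
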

\begin{proof}
From \eqref{eqn:formula-simple-rank-jump} and the first inequality in \eqref{eqn:inequalities}, 
\begin{equation}
\label{eqn:eqn:sharper-inequality-first-step}
\rank (M_A (\beta))\leq \vol(A) + \vol_{\QQ F\cap \ZZ^d}(F) (\operatorname{codim}(F)-1). 
\end{equation}
Now $\vol_{\QQ F\cap \ZZ^d}(F)$ in \eqref{eqn:eqn:sharper-inequality-first-step} can be bounded using \eqref{eqn:volume-F-ineq} in order to obtain~\eqref{eqn:sharper-inequality}.

For~\eqref{eqn:strict-inequality-for-simple}, notice first that if $\codim(F)=1$, then $\rank (M_A(\beta))=\vol(A)$ by \eqref{eqn:formula-simple-rank-jump}; otherwise, $(\codim(F)-1)\geq 1$. 
By \cite[Corollary~9.2]{MMW}, $\cE(A)= \nothing$ is equivalent to $S_A$ being Cohen--Macaulay.
Thus, for the case when $\codim(F)=d$, it is enough to use that the inequality in \eqref{eqn:ineq2} is in fact strict, because $\vol(A)\geq 4$ by Corollary \ref{cor:nonCM-inequality}. 

For the remaining cases, the second part in the inequality~\eqref{eqn:sharper-inequality} is bounded above by 
\[
(d-1)\vol (A)-(n-|F|-\codim(F)).
\] 
Thus, it is enough to see that $n-|F|-\codim(F)\geq 1$. 
By way of contradiction, assume that $n-|F|-\codim(F)=0$ (i.e., $A$ is a pyramid over $F$), so that any $\beta\in\CC^d$ can be written uniquely as $\beta=\beta_F +\beta_{\overline{F}}$ with $\beta_F \in \CC F$, $\beta_{\overline{F}}\in \CC \overline{F}$ for $\overline{F}:=A \setminus F$ and 
\begin{equation}\label{eqn:A-F-equal-rank}
 \rank (M_A(\beta))= \rank( M_F(\beta_F)),
 \end{equation}
see \cite[Lemma 3.7]{reducibility}. It follows from \eqref{eqn:A-F-equal-rank} and Remark \ref{rem:pyramid} that $\beta\in\cE(A)$ if and only if $\beta_F\in\cE(F)$. Notice also that if $F\preceq G\preceq A$, then $\EE_G^\beta=\EE_G^{\beta'}$ for any $\beta'\in \beta+\CC F$. 
If $\beta$ is simple for $F$, the generic vectors $\beta' \in \beta + \CC F$ are also simple for $F$ and $\rank(M_A(\beta'))=\rank (M_A(\beta))$. 
Thus, 
\[
\rank (M_F(\beta'_F)) =\rank (M_F(\beta_F))>\vol_{\ZZ F}(F).
\] 
It follows that for generic $\gamma \in \CC F$, $\rank (M_F(\gamma))>\vol_{\ZZ F}(F)$, which is a contradiction, as this should be equality by~\cite{adolphson}.
\end{proof}

Notice that the difference between \eqref{eqn:strict-inequality-for-simple} and the second
statement of Corollary~\ref{cor:upper-bound-F-simple-rank} is that~\eqref{eqn:strict-inequality-for-simple} is a strict inequality.

\begin{theorem}\label{thm:codimension-three}
The set 
\[
\cE_2(A)\defeq\{\beta \in \CC^d \mid \; \rank (M_A (\beta))\geq 2 \cdot \vol(A)\}
\] 
is an affine subspace arrangement of codimension at least three in $\CC^d$.
\end{theorem}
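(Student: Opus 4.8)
The plan is to exhibit $\cE_2(A)$ as a union of flats of the exceptional arrangement $\cE(A)$ and then to rule out its codimension-two flats, using Corollary~\ref{cor:sharper-upper-bound-F-simple-rank}. First I would note the containment $\cE_2(A)\subseteq\cE(A)$, which holds because $\vol(A)\geq 1$ makes $\rank(M_A(\beta))\geq 2\vol(A)$ imply $\rank(M_A(\beta))>\vol(A)$. By \cite{MMW}, $\cE(A)$ is a \emph{finite} affine subspace arrangement whose components are translates $b+\CC F$ of the spans of faces $F\preceq A$ with $\codim(F)\geq 2$, so each flat (intersection of components) is an affine subspace. On the relative interior of a fixed flat the ranking lattices $\EE^\beta$ depend only on which components contain $\beta$, hence are constant, so by the main theorem of \cite{berkesch} the rank of $M_A(\beta)$ is constant there. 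As holonomic rank is upper semicontinuous in $\beta$, the set $\cE_2(A)$ is closed; a closed union of relatively open strata of a finite subspace arrangement is a union of its flats, so $\cE_2(A)$ is an affine subspace arrangement. It therefore suffices to show that it contains no flat of codimension two.

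Suppose $L\subseteq\cE_2(A)$ is a flat with $\codim(L)=2$. Then $L=b_0+\CC F$ is a codimension-two component of $\cE(A)$ with $\codim(F)=2$, and I would reach a contradiction by computing the rank at a generic $\beta\in L$. No face of codimension at least three contributes to $\EE^\beta$ for such $\beta$: a contributing face $F'$ gives $\beta\in b'+\CC F'$, and when $\codim(F')\geq 3$ this is a proper affine subspace of $L$, avoided by a generic point of $L$. Likewise no second codimension-two face contributes, since distinct faces have distinct spans. Hence the only faces entering $\EE^\beta$ are $F$ and facets of $A$; as facets produce no rank jump, the maximal pairs of $\cJ(\beta)$ all correspond to $F$, so $\beta$ is simple for $F$. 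By Corollary~\ref{cor:sharper-upper-bound-F-simple-rank}, inequality \eqref{eqn:sharper-inequality} with $\codim(F)=2$ gives
\[
\rank(M_A(\beta))\leq 2\,\vol(A)-\big(n-|F|-2\big)\leq 2\,\vol(A)-1<2\,\vol(A),
\]
where the middle inequality uses that $A$ is not a pyramid over $F$ (as $\beta\in\cE(A)$, exactly as in the proof of Corollary~\ref{cor:sharper-upper-bound-F-simple-rank}), so that $n-|F|-\codim(F)\geq 1$ by Lemma~\ref{lem:volume-lower-bound}. This contradicts $L\subseteq\cE_2(A)$. Thus every flat of $\cE_2(A)$ has codimension at least three, proving the theorem.

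The step I expect to be the main obstacle is the assertion that a generic parameter of a codimension-two component $L=b_0+\CC F$ is simple for $F$. The only possible violation comes from facets of $A$ containing $F$, and ruling out an independent maximal coset from such a facet---equivalently, confirming that the holes responsible for the jump along $L$ lie in $\RR_{\geq 0}A$ so that facets contribute nothing to the rank---requires the detailed combinatorial description of $\EE^\beta$ and of the rank from \cite{berkesch}. By comparison, the arrangement structure of $\cE_2(A)$, the reduction to codimension-two flats, and the volume estimate \eqref{eqn:volume-F-ineq} underlying Corollary~\ref{cor:sharper-upper-bound-F-simple-rank} are routine.
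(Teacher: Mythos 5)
Your overall plan---reduce to codimension-two components of $\cE(A)$ and kill them with the volume estimate behind Corollary~\ref{cor:sharper-upper-bound-F-simple-rank} plus the non-pyramid observation---is the paper's strategy, and your final numerical estimate is the paper's. But there are two genuine gaps. The first is structural: your claim that on the relative interior of a flat of $\cE(A)$ the ranking lattices $\EE^\beta$ ``depend only on which components contain $\beta$, hence are constant.'' This is false. Whether a pair $(F',b')$ enters $\cJ(\beta)$ is governed by the countably many lattice translates $b'+\CC F'$, taken over \emph{all} faces $F'$ (facets included) and all cosets $b'+\ZZ F'$ disjoint from $\NN A+\ZZ F'$; these translates are not components of $\cE(A)$ (facet translates never are, and they need not be flats of the finite arrangement), so $\EE^\beta$, and potentially the rank, changes along countably many proper affine subspaces inside the relative interior of a single flat. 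Consequently your conclusion that $\cE_2(A)$ is a union of flats of $\cE(A)$ is not established. The paper gets the arrangement structure differently: rank is upper semicontinuous \cite{MMW}, so the locus of higher rank inside each component $C$ is Zariski closed, and by the argument after Definition 4.7 in \cite{berkesch} the rank strata are themselves affine subspace arrangements; that citation, not a flat-stratification argument, is what makes $\cE_2(A)$ an arrangement.

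The second gap is the one you flagged yourself, and it is real: the assertion that a generic $\beta\in L=b_0+\CC F$ is simple for $F$. The inference ``facets produce no rank jump, hence the maximal pairs of $\cJ(\beta)$ all correspond to $F$'' is a non sequitur: maximality in $\cJ(\beta)$ is about containment of cosets $b'+\ZZ F'$, not about rank contributions, and a pair coming from a facet containing $F$ can perfectly well be a maximal pair. When that happens $\beta$ is by definition not simple, so Corollary~\ref{cor:sharper-upper-bound-F-simple-rank} and formula \eqref{eqn:formula-simple-rank-jump}, whose hypothesis is simplicity, cannot be invoked as you do. The paper never proves simplicity for these parameters; instead it quotes \cite[Section 5.3 and Example 6.21]{berkesch} for the one-sided bound $\rank(M_A(\beta))\leq \vol(A)+|B_G^\beta|(\codim(G)-1)\vol_{\ZZ G}(G)$, valid for all $\beta\in C$ avoiding the translates $\ZZ^d+\CC G'$ of faces $G'$ not containing $G$ (with equality exactly in the simple case), and then applies your volume estimate, $|B_G^\beta|\vol_{\ZZ G}(G)\leq \vol(A)-(n-|G|-2)\leq \vol(A)-1$, to its right-hand side, where ruling out the pyramid case is done as in the proof of Corollary~\ref{cor:sharper-upper-bound-F-simple-rank}. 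With that substitute your computation goes through; note also that ``generic'' here can only mean avoiding a countable, locally finite family of proper subspaces of $C$, which suffices precisely because the set being excluded, $\cE_2(A)\cap C$, is Zariski closed, a point the paper makes explicitly and your write-up glosses over.
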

\begin{proof}
The exceptional arrangement $\cE(A)$ is known to be a finite union of translates of linear subspaces $\CC G$ for faces $G\preceq A$ of codimension at least two \cite[Corollary 9.4 and Porism 9.5]{MMW}.  Moreover, it is shown in \cite[Theorem~2.6]{MMW} that rank of $M_A(\beta)$ is upper-semicontinuous as a function of $\beta$ with respect to the Zariski topology. Thus, on each irreducible component $C$ of $\cE(A)$ the rank of $M_A(\beta)$ is constant outside a Zariski closed subset of $C$, of codimension at least one in $C$ (i.e., of codimension at least three in $\CC^d$). Moreover, this codimension three set is also an affine subspace arrangement; see the argument after Definition 4.7 in \cite{berkesch}. It is thus enough to find, for any codimension two component $C$, a set of parameters $\beta\in C$ such that $\rank (M_A(\beta))<2 \cdot \vol(A)$ and whose Zariski closure is $C$. Indeed, if $C$ has codimension two, we have that $C=b+ \CC G$ for some face $G\preceq F$ of codimension two and some $b\in \CC^d$. 

Notice that for any proper face $G'\preceq A$ not containing $G$, the intersection $C\cap (\ZZ^d +\CC G')$ is at most a countably and locally finite union of translates of the linear space $\CC G\cap \CC G'$ of codimension at least three. 
Since there are only finitely many such faces $G'$, the Zariski closure of the set 
\[
C':=C\setminus \bigcup_{G\neq G'\preceq A}(\ZZ^d +\CC G') 
\]
is $C$. Moreover, for $G'$ as above and $\beta \in C'$, we have that $\ZZ^d \cap (\beta +\CC G')=\nothing$, hence $\EE^\beta_{G'}=\nothing$. In particular, the only possible faces involved in $\mathcal{J}(\beta)$ are $G$ and the two facets containing $G$. Thus, \cite[Section 5.3 and Example 6.21]{berkesch} yield the inequality
\begin{equation*}
\rank (M_A (\beta))\leq\vol(A)+ |B_G^\beta| (\codim(G)-1) \vol_{\ZZ G}(G),\end{equation*} 
where equality holds if $\beta$ is simple for $G$, as in Definition~\ref{def:simple}. From \eqref{eqn:sharper-inequality} applied to the codimension two face $G$, we obtain that 
\begin{equation}
\rank (M_A(\beta))\leq 2 \cdot  \vol(A) -(n-|G|-2)< 2 \cdot \vol(A)
\end{equation}
where $n-|G|-2\geq 1$ holds by the same argument as in the proof of Corollary~\ref{cor:sharper-upper-bound-F-simple-rank}. 
Thus, $\rank (M_A(\beta))<2\cdot  \vol(A)$ for $\beta \in C'$, where the Zariski closure of $C'$ is $C$.
\end{proof}

\section{A sequence of examples in the simple case}
\label{sec:examples}

In this section, we prove that for any $d\geq 3$, the strict inequality~\eqref{eqn:strict-inequality-for-simple} from Corollary \ref{cor:sharper-upper-bound-F-simple-rank} is sharp for simple parameters $\beta$ as in Definition~\ref{def:simple}, see Theorem \ref{thm:family-examples}.

\begin{theorem}\label{thm:family-examples}
There is a sequence of full rank $(d\times (2d-1))$ integer matrices $\{A_{d,b}\}_{b=2}^\infty$ for which there is a simple parameter $\beta\in\CC^d$ independent of $b$ for which 
\[
\lim_{b\to\infty} 
\frac{\rank (M_{A_{d,b}}(\beta))}{\vol(A_{d,b})} 
= d-1.
\]
In fact, the set of simple parameters $\beta\in \CC^d$ that maximize the ratio $\rank (M_{A_{d,b}}(\beta))/\vol(A_{d,b})$ is a line through the origin.
\end{theorem}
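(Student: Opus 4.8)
The plan is to construct, for each $d \geq 3$ and each integer $b \geq 2$, an explicit $(d \times (2d-1))$-matrix $A_{d,b}$ together with a simple parameter $\beta$, such that the rank formula~\eqref{eqn:formula-simple-rank-jump} produces a ratio approaching $d-1$. Guided by Corollary~\ref{cor:sharper-upper-bound-F-simple-rank}, the ratio $\rank(M_A(\beta))/\vol(A)$ approaches $\codim(F)$ exactly when $\vol(A)$ is as small as possible relative to $n-d$ while $|B_F^\beta| \cdot \vol_{\ZZ F}(F)$ stays as close as possible to $\vol(A)$. To push the codimension of the simple face up to $d-1$, I would take the distinguished face $F$ to be a single column (so $\dim(\RR F)=1$ and $\codim(F)=d-1$), and design the remaining columns so that $F$ generates a large index $[\ZZ^d \cap \QQ F : \ZZ F]$, controlled by the parameter $b$, while keeping $n=2d-1$ fixed. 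The idea is that as $b \to \infty$, the term $|B_F^\beta|(\codim(F)-1)\vol_{\ZZ F}(F)$ dominates, and since $n-|F|-\codim(F) = (2d-1) - 1 - (d-1) = d-1$ is held constant, the correction term in~\eqref{eqn:sharper-inequality} becomes negligible against a volume that grows linearly in $b$.

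The concrete steps are as follows. First I would write down the matrix $A_{d,b}$ explicitly; the natural candidate places one column along a ray that meets $\ZZ^d$ with large index (a column like $(b, 0, \dots, 0)$ or similar, so that $\vol_{\ZZ F}(F) = b$), together with $d-1$ columns spanning a complementary direction and $d-1$ further ``filler'' columns arranged so that the toric data behaves as required. Second, I would verify the standing hypotheses from Section~\ref{sec:lowerBounds}: full rank $d$, $\ZZ A = \ZZ^d$, positivity of $\NN A$, and distinctness of columns. Third, I would compute $\vol(A_{d,b})$ directly, showing it equals a known linear function of $b$ (roughly $b$ plus a constant depending only on $d$). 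Fourth, I would exhibit the simple parameter $\beta$ by locating a face $F$ and a point $b_0 \in B_F^\beta$ so that $\EE_F^\beta \neq \nothing$, and verify simplicity by checking that all maximal pairs in $\cJ(\beta)$ correspond to the single face $F$; computing $|B_F^\beta|$ and $\vol_{\ZZ F}(F)$ then feeds into~\eqref{eqn:formula-simple-rank-jump}. Finally, taking the limit as $b \to \infty$ gives the ratio $d-1$. For the last sentence of the statement, I would argue that the ratio is maximized precisely along $\CC F$ (the line spanned by the chosen column), since moving $\beta$ off this line either destroys simplicity or strictly decreases $|B_F^\beta|$, and scaling $\beta$ along $\CC F$ preserves all the relevant combinatorial data; this pins the maximizing locus to a single line through the origin.

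The main obstacle I anticipate is the verification of simplicity together with the exact computation of $|B_F^\beta|$. By definition, $\beta$ is simple for $F$ only when \emph{every} maximal pair $(F',b')$ in $\cJ(\beta)$ has $F' = F$; ruling out contributions from all other faces $G' \preceq A$ (in particular the facets containing $F$ and any faces transverse to $F$) requires understanding which lattice translates $b' + \ZZ G'$ lie in $\ZZ^d \setminus \NN A$ and contain $\beta$ in their complexification. This is precisely the combinatorics of the holes of $\NN A$ relative to $\beta$, and getting $|B_F^\beta|$ to grow with $b$ (ideally like $b-1$, so that $|B_F^\beta| \cdot \vol_{\ZZ F}(F)$ matches $\vol(A)$ to leading order) while keeping the parameter simple is the delicate point. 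I would handle this by choosing $\beta$ generically along $\CC F$ so that $\EE_{G'}^\beta = \nothing$ for every face $G'$ not equal to $F$, exactly as in the genericity argument used in the proof of the preceding theorem about $\cE_2(A)$; the explicit structure of $A_{d,b}$ should make the count of holes along $\beta + \CC F$ transparent enough to evaluate $|B_F^\beta|$ in closed form.
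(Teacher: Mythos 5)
Your plan coincides with the paper's own construction: the paper takes $A_{d,b}$ with columns $e_1,\dots,e_{d-1}$, filler columns $e_1+e_d,\dots,e_{d-1}+e_d$, and distinguished column $be_d$, computes $\vol(A_{d,b})=b+d-1$, shows the holes of $\NN A_{d,b}$ along $\CC e_d$ give $|B_{F_b}^\beta|=b-1$ with every $\beta\in\CC e_d$ simple for $F_b=\{be_d\}$ (and a strictly smaller count on the other exceptional components $me_k+\CC e_d$, $m\neq 0$), and concludes $\rank(M_{A_{d,b}}(\beta))=(d-1)b+1$ -- exactly the route you outline, including the genericity/holes argument for simplicity and the maximizing line $\CC F_b$. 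The only correction needed is notational: for a single-column face $F$ one has $\vol_{\ZZ F}(F)=1$ while it is the index $[\ZZ^d\cap\QQ F:\ZZ F]=b$ (equivalently $\vol_{\ZZ^d\cap\QQ F}(F)=b$) that is large, so the jump term is $|B_F^\beta|(\codim(F)-1)\vol_{\ZZ F}(F)=(b-1)(d-2)$, consistent with your final paragraph's (correct) accounting but not with the earlier claim ``$\vol_{\ZZ F}(F)=b$,'' which together with $|B_F^\beta|=b-1$ would violate the bound \eqref{eqn:inequalities}.
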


Consider the following $(d\times (2d-1))$-matrix  with $d\geq 3$:
\begin{equation}\label{eqn:Adb}
A_{d,b}
= (a_1 \; a_2\; \cdots a_{2d-1})
\defeq \left(\begin{array}{ccc}
           I_{d-1} & I_{d-1} & \bf{0}_{d-1}\\
           \bf{0}_{d-1}^t & \bf{1}_{d-1}^t & b
          \end{array}\right),
\end{equation}
where $b\geq 2$ is an integer, $I_{d-1}$ denotes the identity matrix of rank $d-1$, $\bf{1}_{d-1}$ is the column vector consisting of $d-1$ entries of $1$, and $\bf{0}_{d-1}$ is the zero column vector of length $d-1$.
          
Note that $\ZZ A_{d,b}=\ZZ^d$. We now compute the normalized volume of $A_{d,b}$ in this lattice.
To do this, for $j\in \ZZ$, set $h^{(j)}\defeq (0,\ldots,0,j)^t$.          
        
\begin{lemma}\label{lem:lemma-volume}
The normalized volume of $A_{d,b}$ in \eqref{eqn:Adb} is $b+d-1$.
\end{lemma}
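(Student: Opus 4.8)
The plan is to compute $\vol(A_{d,b})$ directly from the geometry of the polytope $\Delta_{A_{d,b}} = \conv(\mathbf{0}, a_1, \ldots, a_{2d-1})$. Since $\ZZ A_{d,b} = \ZZ^d$ and the matrix has full rank $d$, I would first triangulate $\Delta_{A_{d,b}}$ into full-dimensional lattice simplices whose interiors are disjoint, and then sum their normalized volumes, each of which is the absolute value of the determinant of the $d$ column vectors spanning the simplex (relative to the origin). The normalized volume $\vol(A_{d,b})$ is the sum of these determinants.

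The key geometric observation to exploit is the block structure of \eqref{eqn:Adb}. The columns $a_1, \ldots, a_{d-1}$ are the standard basis vectors $e_1, \ldots, e_{d-1}$; the columns $a_d, \ldots, a_{2d-2}$ are $e_i + e_d$ for $i = 1, \ldots, d-1$; and the final column $a_{2d-1} = h^{(b)} = b\,e_d$ is $b$ times the last standard basis vector. First I would split $\Delta_{A_{d,b}}$ along the hyperplane $x_d = 1$ (or along suitable coordinate hyperplanes) to separate the contribution of the "prism-like" piece spanned by the first $2d-2$ columns from the contribution of the tall edge in the $e_d$-direction. One natural strategy: identify the simplex $\sigma = \{e_1, \ldots, e_{d-1}, e_d\}$-type cones and compute volumes incrementally using Lemma~\ref{lem:adding-a-point}, each step increasing the normalized volume by a controlled amount.

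Concretely, I expect the count to split as $\vol(A_{d,b}) = (d-1) + b$, matching the claimed $b + d - 1$. The summand $d-1$ should come from the lattice simplices built out of the unit-square/prism structure of the $\{e_i, e_i + e_d\}$ columns, each contributing normalized volume $1$, and the summand $b$ should come from the simplex $\conv(\mathbf{0}, e_1, \ldots, e_{d-1}, b\,e_d)$, whose normalized volume is the determinant $\det(e_1, \ldots, e_{d-1}, b\,e_d) = b$. The cleanest route is probably to exhibit one explicit unimodular-up-to-scaling triangulation: check that the simplices $\conv(\mathbf{0}, a_{i_1}, \ldots, a_{i_d})$ chosen have interiors covering $\Delta_{A_{d,b}}$ without overlap, and that their determinants sum to $b + d - 1$.

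The main obstacle I anticipate is verifying that the chosen triangulation is genuinely a triangulation — that the simplices tile $\Delta_{A_{d,b}}$ with disjoint interiors and cover the whole polytope, so that no volume is double-counted or omitted. This is where I would spend the most care: rather than guessing simplices, I would prefer to slice $\Delta_{A_{d,b}}$ by the family of hyperplanes $x_d = t$ for $t \in [0, b]$, compute the cross-sectional volumes, and integrate, or else argue inductively on $d$ by recognizing $A_{d,b}$ as built from $A_{d-1,\cdot}$ plus a pyramid-type extension, using the additivity of normalized volume under the decomposition guaranteed by Lemma~\ref{lem:adding-a-point}. Either way, the determinant computation $\det(e_1, \ldots, e_{d-1}, b\,e_d) = b$ is the easy part; the bookkeeping of the decomposition is the real work.
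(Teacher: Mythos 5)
Your overall strategy---decompose $\Delta_{A_{d,b}}$ into lattice pieces and add up normalized volumes---is exactly the right one, and your predicted split $b+(d-1)$ is arithmetically consistent with the claim. But as written the proposal has a genuine gap: the decisive step, namely exhibiting a specific decomposition and verifying that it tiles $\Delta_{A_{d,b}}$, is never carried out. You hedge among three strategies (an explicit triangulation, slicing by $x_d=t$ and integrating, induction on $d$) and complete none of them; you yourself flag this as ``the real work,'' and it is precisely the content of the lemma. Moreover, the one concrete family of simplices you do name is internally inconsistent: you stipulate simplices of the form $\conv(\mathbf{0},a_{i_1},\ldots,a_{i_d})$, all containing the origin, yet your designated big simplex $S_0=\conv(\mathbf{0},e_1,\ldots,e_{d-1},b\,e_d)$ cannot belong to any triangulation of $\Delta_{A_{d,b}}$ in which every maximal simplex contains $\mathbf{0}$. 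In such a triangulation, the facet of each simplex opposite $\mathbf{0}$ must lie on the boundary of the polytope (an interior facet would be shared with a second simplex, which, containing both that facet and $\mathbf{0}$, would coincide with the first); but the facet $\conv(e_1,\ldots,e_{d-1},b\,e_d)$ of $S_0$ is interior to $\Delta_{A_{d,b}}$, since the points $e_i+e_d$ lie strictly on its far side. So either you drop $S_0$ (a genuine star triangulation from $\mathbf{0}$ uses instead the cone over the top facet, $\conv(\mathbf{0},e_1+e_d,\ldots,e_{d-1}+e_d,b\,e_d)$, which also has volume $b$), or you must allow pieces not containing the origin and then prove the tiling claim for them.

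For comparison, the paper avoids all of this bookkeeping with a single hyperplane cut at $x_d=1$. Below the cut one gets the prism over the unit $(d-1)$-simplex, namely $\conv(\mathbf{0},a_1,\ldots,a_{2d-2},h^{(1)})$ with $h^{(1)}=e_d$, whose normalized volume is $d!\cdot\frac{1}{(d-1)!}=d$; above the cut one gets the $d$-simplex $\conv(h^{(1)},a_d,\ldots,a_{2d-1})$, which is the lattice translate by $h^{(1)}$ of $\conv(\mathbf{0},e_1,\ldots,e_{d-1},(b-1)e_d)$ and hence has normalized volume $b-1$; the total is $d+(b-1)=b+d-1$. Your split $b+(d-1)$ can in fact also be realized by a single cut, along the hyperplane $b(x_1+\cdots+x_{d-1})+x_d=b$ through $e_1,\ldots,e_{d-1},b\,e_d$: the piece containing the origin is your $S_0$, of volume $b$, and the other piece is the pyramid with apex $b\,e_d$ over the prism facet $\conv(e_1,\ldots,e_{d-1},e_1+e_d,\ldots,e_{d-1}+e_d)$, which a staircase triangulation coned to the apex shows has volume $d-1$. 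Either cut works, but some such explicit verification must appear for the proof to be complete.
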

\begin{proof}
The polytope $\Delta_{A_{d,b}}$ can be decomposed as the union of two polytopes in $\RR^d$ that intersect in a common facet. 
One of these polytopes is the convex hull of the origin, the first $2(d-1)$ columns of $A_{d,b}$, and the lattice point $h^{(1)}$. This is a prism with height $1$ and base equal to a unit $(d-1)$-simplex, so its normalized volume in $\ZZ^d$ is $d$. The second polytope is the convex hull of $h^{(1)}$ and the last $d$ columns of $A_{d,b}$, which is a $d$-simplex. This $d$-simplex is the lattice translation by $h^{(1)}$ of the $d$-simplex that is the convex hull of the origin, the first $(d-1)$-columns of $A_{d,b}$, and $h^{(b-1)}$; therefore, its normalized volume in $\ZZ^d$ is $b-1$.
\end{proof}

\begin{remark}\label{rem:b-copies-of-F}
The last column of $A_{d,b}$ is $b\cdot e_d$, where $e_d$ is the $d$th standard basis vector in $\CC^d$. The face $F_b\defeq \{a_{2d-1}\}\preceq A_{d,b}$ has normalized volume $1$ in the lattice $\ZZ F_b$ and 
\[
\ZZ^d \cap \CC F_b=\bigsqcup_{k=0}^{b-1} \left(h^{(k)}+\ZZ F_b\right)
\] 
consists of $b$ translated copies of $\ZZ F_b$. 
\end{remark}

\begin{remark}\label{rem:remark-minimal-volume}
The normalized volume of $F_b\defeq \{a_{2d-1}\}$ in the lattice $\ZZ^d\cap \QQ F_b$ is $b$. In particular, equality holds in \eqref{eqn:volume-F-ineq} for $A=A_{d,b}$ and $F=F_b$.
\end{remark}

\begin{prop}\label{prop:exceptional}
The exceptional arrangement of $A_{d,b}$ is a finite union of lines parallel to $\CC e_d$.
\end{prop}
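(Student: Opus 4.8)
The plan is to determine the exceptional arrangement $\cE(A_{d,b})$ by identifying exactly which faces $G \preceq A_{d,b}$ can contribute to rank jumps and then describing the corresponding translated linear subspaces. By \cite[Corollary 9.4 and Porism 9.5]{MMW}, $\cE(A_{d,b})$ is a finite union of translates $b' + \CC G$ where $G$ ranges over faces of codimension at least two for which the relevant local cohomology of $S_{A_{d,b}}$ is nonzero. So the first step is to enumerate the faces $G$ of $A_{d,b}$ of codimension $\geq 2$ and determine which of them actually produce a nonempty contribution, i.e.\ for which $G$ there exists $\beta$ with $\EE_G^\beta \neq \nothing$ and $G$ maximal in $\cJ(\beta)$.

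The key structural observation I would exploit is the one recorded in Remark~\ref{rem:b-copies-of-F}: the single ``long'' column $a_{2d-1} = b\cdot e_d$ is the only source of non-saturation of the semigroup, since the first $2(d-1)$ columns together with $h^{(1)}$ already generate a saturated (normal) sublattice picture in the $e_1,\dots,e_{d-1}$ directions. Concretely, the holes of $\NN A_{d,b}$, namely $(\ZZ^d \cap \RR_{\geq 0}A_{d,b}) \setminus \NN A_{d,b}$, all lie along the ray $\RR_{\geq 0}e_d$ at the non-semigroup points $h^{(k)}$ for $k \not\equiv 0 \pmod b$ that are not reachable through the other generators. I would show that the only face $G$ whose lattice translates $b'+\ZZ G$ can land in $\ZZ^d \setminus \NN A_{d,b}$ with $\beta \in b'+\CC G$ is $G = F_b = \{a_{2d-1}\}$ (and possibly faces containing it, but those fail the codimension-at-least-two maximality requirement or are subsumed). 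Since $\CC F_b = \CC e_d$, every resulting component of $\cE(A_{d,b})$ is a translate of $\CC e_d$, i.e.\ a line parallel to $\CC e_d$; finiteness follows because $B_{F_b}^\beta$ draws from the finitely many coset representatives $h^{(0)},\dots,h^{(b-1)}$ of $\ZZ F_b$ in $\ZZ^d \cap \CC F_b$.

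The main obstacle I anticipate is ruling out spurious higher-codimension faces. A priori, faces $G$ spanned by proper subsets of the columns (for instance subsets of the $I_{d-1}$ block, or mixed subsets) could in principle index a nonempty $\EE_G^\beta$; I must verify that none of these contribute, which amounts to checking that along $\beta + \CC G$ the semigroup $\NN A_{d,b} + \ZZ G$ has no holes, equivalently that $[\ZZ^d \cap \QQ G : \ZZ G]$ equals the number of $\ZZ G$-translates contained in $\NN A_{d,b}+\ZZ G$. Here I would use that $\ZZ A_{d,b} = \ZZ^d$ together with the explicit unimodular structure of the short columns: any face not containing $a_{2d-1}$ spans a saturated lattice (its index is $1$), so $B_G^\beta = \nothing$ for such $G$. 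This isolates $F_b$ as the sole contributor and pins down the answer.

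After establishing that $G = F_b$ is the only contributing face, I would finish by describing the components explicitly: for each residue $k \in \{1,\dots,b-1\}$ with $\gcd$-type obstruction, the set of $\beta$ with $h^{(k)} \in \EE_{F_b}^\beta$ is precisely $h^{(k)} + \CC e_d$ together with the finitely many further translates forced by interaction with the two facets containing $F_b$, each of which is again a line parallel to $\CC e_d$. Collecting these finitely many lines gives the claimed description, and the codimension-$\geq 2$ requirement is automatic since $\codim(F_b) = d-1 \geq 2$ for $d \geq 3$.
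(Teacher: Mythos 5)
Your high-level plan (invoke the structure theory of \cite{MMW}, then decide which faces of codimension at least two can support a maximal pair in $\cJ(\beta)$, so that every component of $\cE(A_{d,b})$ is forced to be a translate of $\CC F_b$) is the same as the paper's, and it even correctly names the maximality condition that must be checked. But the two concrete claims you use to execute it are both false, and they sit exactly where the real work of the proposition lies. First, the holes of $\NN A_{d,b}$ do \emph{not} all lie on the ray $\RR_{\geq 0}e_d$ once $b\geq 3$. Writing $\NN A_{d,b}=S+\NN b e_d$ with $S$ the (normal) semigroup generated by the first $2(d-1)$ columns, a point $c\in\NN^d$ lies in $\NN A_{d,b}$ exactly when the residue of $c_d$ modulo $b$ is at most $c_1+\cdots+c_{d-1}$; thus $e_1+2e_d$ is a hole for every $b\geq 3$. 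The paper's proof makes precisely this computation: the holes form translates $c+\NN b e_d$ with $c$ running over lattice points of a full-dimensional simplex $\Delta_b$, and consequently $\cE(A_{d,b})$ is the union of the $(d-1)(b-1)$ lines $me_k+\CC e_d$ with $1\le k\le d-1$, $0\le m\le b-2$ --- not the single line through the points $h^{(k)}$ (which \emph{is} just $\CC e_d$) that your hole picture would yield. Indeed the paper notes that for $b\geq 3$ every lattice point of $\Delta_{A_{d,b}}$ is exceptional, so your closing description of the components is not merely vague but wrong.

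Second, and more damaging to the logic, the inference ``$G$ does not contain $a_{2d-1}$, hence $\ZZ G$ is saturated in $\ZZ^d$, hence $B_G^\beta=\nothing$ for all $\beta$'' is a non sequitur. Saturation only says that $\ZZ^d\cap(\beta+\CC G)$, when nonempty, is a \emph{single} translate of $\ZZ G$; that translate can perfectly well be disjoint from $\NN A_{d,b}+\ZZ G$. Concretely, take $G=\{a_2\}=\{e_2\}$, a face of codimension $d-1\geq 2$ with $\ZZ e_2$ saturated, and $\beta=-e_1$: one computes $\NN A_{d,b}+\ZZ e_2=\{x\in\ZZ^d : x_j\geq 0 \text{ for all } j\neq 2\}$, so $\EE_G^\beta=-e_1+\ZZ e_2\neq\nothing$. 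The true reason such faces contribute nothing to $\cE(A_{d,b})$ is maximality, which your write-up announces but never verifies: every such nonempty $\EE_G^\beta$ occurs at a parameter with a negative coordinate in a direction outside $G$, and the translate $-e_1+\ZZ G$ is contained in $-e_1+\ZZ G'$ for the facet $G'=A_{d,b}\cap\{x_1=0\}$, which is itself disjoint from $\NN A_{d,b}+\ZZ G'=\{x : x_1\geq 0\}$; so $(G,-e_1)$ is never maximal in $\cJ(\beta)$, and maximal pairs supported on codimension-one faces give zero rank jump by \eqref{eqn:formula-simple-rank-jump}. Supplying this subsumption-into-facets argument (or, as the paper does, deducing everything from the explicit decomposition $\NN A_{d,b}=S+\NN b e_d$ and the resulting hole description, then citing \cite{MMW,berkesch}) is the actual content of the proof; without it, your argument establishes nothing beyond the general structure theorem you quoted at the outset.
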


\begin{proof}
The first $d-1$ columns of $A_{d,b}$ and its last column are linearly independent, and their nonnegative hull is precisely the first orthant $\RR_{\geq 0}^d$. Thus, $\RR_{\geq 0}A_{d,b}=\RR_{\geq 0}^d$ and $\RR_{\geq 0}A_{d,b}\cap \ZZ^d=\NN^d$.
To determine the set of holes of $\NN A_{d,b}$, given by $\NN^d\setminus \NN A_{d,b}$, which can be written as a finite union of lattice translates of $\NN F_b=\NN b e_d$, 
notice first that the affine semigroup $S\subseteq \NN^d$ generated by the first $2(d-1)$ columns of $A_{d,b}$ is normal, and their lattice span is $\ZZ^d$. Note also that, for $F_b\defeq \{a_{2d-1}\}$, 
\[
\NN A_{d,b}\cap \CC F_b=\NN F_b=\NN b e_d.
\] 
In order to complete the description of $\NN^d\setminus \NN A_{d,b}$, denote by $\Delta_b$ the simplex given by the convex hull of the following points in $\NN A_{d,b}$: 
\[
{\bf 0}, \, 
b a_d=b (e_1+e_d), \, 
b a_{d+1}=b (e_2+e_d),
\, \ldots, \, 
b a_{2d-2}=b(e_{d-1}+e_d), \, 
a_n=b e_d.
\] 
Since $\NN A_{d,b}= S+\NN b e_d$, the set of holes of $\NN A_{d,b}$ is the union of the sets $c+\NN b e_d$, where $c$ runs through the lattice points: 
\[
\ZZ^d\cap \Delta_b\setminus 
\left(
\RR_{\geq 0} (e_1 + e_d )
\,\cup\,
\RR_{\geq 0} (e_2 + e_d )
\,\cup\,  \cdots   \,\cup\,  
\RR_{\geq 0} (e_{d-1}+e_d)
\,\cup\, 
\left(b e_d + \sum_{k=1}^{d-1} \RR_{\geq 0} e_k\right)
\right).
\] 
It now follows from \cite{MMW,berkesch} that the exceptional arrangement of $A_{d,b}$ is 
\[
\cE(A_{d,b})=\bigcup_{k=1}^{d-1} \bigcup_{m=0}^{b-2} (m e_k+\CC F_b).
\qedhere
\]
\end{proof}

By the proof of Proposition~\ref{prop:exceptional}, 
if $b\geq 3$, then all the lattice points in $\Delta_{A_{d,b}}$ belong to $\cE (A_{d,b})$.

\begin{lemma}\label{lem:rank-max}
If $F_b\defeq \{a_{2d-1}\}$, then the function $\beta \in \CC^d \mapsto \rank (M_{A_{d,b}}(\beta))$ reaches its maximum exactly when $\beta \in \CC F_b$, and this maximum value is $(d-1)b+1$.  
\end{lemma}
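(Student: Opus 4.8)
The plan is to show that, for \emph{every} $\beta$, the rank of $M_{A_{d,b}}(\beta)$ is governed by the single face $F_b$ through the formula~\eqref{eqn:formula-simple-rank-jump}, and then to optimize the resulting expression in one integer parameter. The key structural input is that $F_b$ is the \emph{only} face of $A_{d,b}$ whose lattice is a proper sublattice of its saturation: writing a face as $G_T$, the columns supported on a coordinate subset $T\subseteq\{1,\dots,d\}$, one checks directly that for $T\neq\{d\}$ the generators of $G_T$ already span $\ZZ^d\cap\QQ G_T$ (the pairs $e_j,e_j+e_d$ recover $e_d$ whenever $d\in T$ and $T\neq\{d\}$, and $G_T$ is a unit simplex when $d\notin T$), whereas for $T=\{d\}$ one has $\ZZ F_b=\ZZ\,be_d$ of index $b$ in $\ZZ e_d$, as in Remark~\ref{rem:b-copies-of-F}. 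By Proposition~\ref{prop:exceptional} this forces every hole of $\NN A_{d,b}$ to lie on a translate of the ray $\NN F_b$.

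First I would prove that no face other than $F_b$ can contribute a maximal pair of codimension at least two to $\cJ(\beta)$. For $G_T\neq F_b$ one has $\ZZ G_T=\ZZ^d\cap\QQ G_T$, so $\EE_{G_T}^\beta$ is either empty or a single full coset $b'+\ZZ G_T$; computing the projection of $\NN A_{d,b}$ that cuts out $\NN A_{d,b}+\ZZ G_T$ shows that such a coset is nonempty only when some coordinate $b'_i$ with $i\notin T$ is a negative integer. In that case the facet $\{x_i=0\}\supseteq G_T$ satisfies $\NN A_{d,b}+\ZZ\{x_i=0\}=\{v:v_i\ge0\}$, its coset is also nonempty, and it contains $b'+\ZZ G_T$; thus the pair coming from $G_T$ is dominated by a facet (while the remaining holes, those with nonnegative coordinates, lie on $F_b$-cosets and are dominated by $F_b$ itself along the $e_d$-direction). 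Since facets have $\codim-1=0$, they contribute nothing to~\eqref{eqn:formula-simple-rank-jump}. Consequently the only codimension-$\ge2$ maximal face is $F_b$, so \eqref{eqn:formula-simple-rank-jump} applies with $G=F_b$ (any co-maximal facet being immaterial), and using $\codim(F_b)=d-1$, $\vol_{\ZZ F_b}(F_b)=1$ and Lemma~\ref{lem:lemma-volume} we obtain
\[
\rank\big(M_{A_{d,b}}(\beta)\big)=(b+d-1)+|B_{F_b}^\beta|\,(d-2).
\]

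It then remains to maximize $|B_{F_b}^\beta|$, the number of cosets of $\ZZ F_b$ on the line $\beta+\CC F_b$ that avoid $\NN A_{d,b}+\ZZ F_b$. When $F_b$ is not dominated by a facet this line meets $\NN^d$, and expanding a point of $\NN A_{d,b}$ on it in the generators shows that the residues modulo $b$ that are covered are exactly $\{0,1,\dots,m\}$ when $\beta\in m e_k+\CC F_b$; in particular the residue of $h^{(0)}=0$ is always covered, so $|B_{F_b}^\beta|\le b-1$, with $|B_{F_b}^\beta|=b-m-1$ along the components of Proposition~\ref{prop:exceptional}. The maximum $b-1$ is attained precisely for $m=0$, that is, for $\beta\in\CC F_b$, where it yields $\rank=(b+d-1)+(b-1)(d-2)=(d-1)b+1$. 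For every other $\beta$ the rank is strictly smaller: off $\cE(A_{d,b})$ it equals $\vol(A_{d,b})=b+d-1$, while on a component with $m\ge1$ it is at most $(b+d-1)+(b-2)(d-2)=(d-1)b+1-(d-2)<(d-1)b+1$ since $d\ge3$. This simultaneously identifies the maximal value as $(d-1)b+1$ and shows it is attained exactly on the line $\CC F_b$.

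The main obstacle is the reduction carried out in the second paragraph: proving that across \emph{all} of $\CC^d$ the only positively weighted maximal face is $F_b$, so that the clean formula~\eqref{eqn:formula-simple-rank-jump} may be used in place of the full signed formula of~\cite{berkesch}. This is exactly where the special geometry of $A_{d,b}$ enters—that $F_b$ is the unique face of index greater than one and that every hole points in the single direction $e_d$—and it requires the coset- and facet-domination bookkeeping for each face $G_T$. Once this is in place, the residue count for $|B_{F_b}^\beta|$ and the final comparison are routine.
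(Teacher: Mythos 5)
Your proof is correct and follows essentially the same route as the paper's: both arguments rest on the hole structure established in (the proof of) Proposition~\ref{prop:exceptional}, apply the simple-parameter formula~\eqref{eqn:formula-simple-rank-jump} to $G=F_b$ with $\vol_{\ZZ F_b}(F_b)=1$ and $\codim(F_b)-1=d-2$, count $|B_{F_b}^\beta|=b-1-m$ along each component $me_k+\CC F_b$ of $\cE(A_{d,b})$, and conclude that the maximum $b-1$ occurs exactly on $\CC F_b$, giving $(b+d-1)+(b-1)(d-2)=(d-1)b+1$ by Lemma~\ref{lem:lemma-volume}. The only difference is one of detail: you make explicit, via the saturation and facet-domination bookkeeping for each face $G_T$, why $F_b$ is the unique face carrying maximal pairs of codimension at least two (a point the paper leaves implicit), and both arguments treat maximal facet cosets, which do occur at special points of these lines, as contributing nothing to the rank jump, a fact resting on \cite{berkesch} as the paper itself invokes elsewhere.
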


\begin{proof}
We first show that if $\beta \in \CC F_b$, then the rank of $M_{A_{d,b}}(\beta)$ is $(d-1)b+1$. In this case, the ranking lattices at $\beta$ are 
\[
\EE^{\beta} = 
\bigcup_{j=1}^{b-1} \left(h^{(j)}+\ZZ F_b\right). 
\]
Thus, by \eqref{eqn:formula-simple-rank-jump}, the rank jump at $\beta$ is equal to 
\[
\rank(M_{A_{d,b}}(\beta)) - \vol(A_{d,b}) = 
|B_{F_b}^{\beta}|\cdot \vol_{\ZZ F_b}(F_b)\cdot (\codim (F_b)-1),
\] 
where $\codim (F_b)=d-1$, $\vol_{\ZZ F_b}(F_b)=1$, and $|B_{F_b}^{\beta}|=b-1$ by Remark \ref{rem:b-copies-of-F}, since 
\[
\ZZ F_b\subseteq (\NN A_{d,b} +\ZZ F_b)\cap (\beta+ \CC F)\cap \ZZ^d.
\] 
Thus, 
$\rank (M_{A_{d,b}} (\beta))=\vol_{\ZZ^d}(A_{d,b})+ (b-1)(d-1)$,
which gives the desired equality by Lemma \ref{lem:lemma-volume}. 

In order to prove that this is the maximum value of $\rank (M_{A_{d,b}} (\beta))$, it is enough to observe that when $\beta$ lies in a component of the form $(m e_k+\CC F_b)\subseteq \cE(A_{d,b})$ with $m\neq 0$, then the computation of the rank jump is analogous to the previous case, but the number $|B_{F_b}^{\beta}|$ will be smaller. 
This is the case because 
\[
me_k, \, 
m e_k +e_d, \, 
m e_k +2 e_d, \, \ldots, \, 
m e_k+ m e_d\in \NN A_{d,b},
\] 
and hence there are $(m+1)$ translated copies of $\ZZ F_b$ in $\NN A_{d,b}\cap (m e_k+\CC F_b)$.
\end{proof}

\begin{proof}[Proof of Theorem \ref{thm:family-examples}]
The result now follows immediately from Lemmas \ref{lem:rank-max} and \ref{lem:lemma-volume}.
\end{proof}

\raggedbottom
\def\cprime{$'$} \def\cprime{$'$}
\providecommand{\MR}{\relax\ifhmode\unskip\space\fi MR }
\providecommand{\MRhref}[2]{%
  \href{http://www.ams.org/mathscinet-getitem?mr=#1}{#2}
}
\providecommand{\href}[2]{#2}

\end{document}